\documentclass[11pt]{article}
\usepackage{amsmath,amsfonts,amssymb,amsthm}
\usepackage{bbm}
\usepackage{enumerate}
\usepackage{esint}
\usepackage[pagebackref,colorlinks,citecolor=blue,linkcolor=blue]{hyperref}
\usepackage[dvipsnames]{xcolor}
\usepackage{mathtools}  

\usepackage{comment}

\usepackage{geometry}
\geometry{verbose,tmargin=3cm}

\numberwithin{equation}{section}
\theoremstyle{plain}
\newtheorem{theorem}[equation]{Theorem}
\newtheorem{corollary}[equation]{Corollary}
\newtheorem{lemma}[equation]{Lemma}
\newtheorem{proposition}[equation]{Proposition}

\theoremstyle{definition}

\newtheorem{example}[equation]{Example}
\newtheorem{remark}[equation]{Remark}

\numberwithin{equation}{section}

\newcommand{\R}{{\mathbb R}}
\newcommand{\N}{{\mathbb N}}

\newcommand{\Om}{\Omega}

\providecommand{\vint}[1]{\mathchoice
	{\mathop{\vrule width 5pt height 3 pt depth -2.5pt
			\kern -9pt \kern 1pt\intop}\nolimits_{\kern -5pt{#1}}}
	{\mathop{\vrule width 5pt height 3 pt depth -2.6pt
			\kern -6pt \intop}\nolimits_{\kern -3pt{#1}}}
	{\mathop{\vrule width 5pt height 3 pt depth -2.6pt
			\kern -6pt \intop}\nolimits_{\kern -3pt{#1}}}
	{\mathop{\vrule width 5pt height 3 pt depth -2.6pt
			\kern -6pt \intop}\nolimits_{\kern -3pt{#1}}}}

\newcommand{\eps}{\varepsilon}
\newcommand{\loc}{\mathrm{loc}}

\newcommand{\BV}{\mathrm{BV}}

\newcommand{\ch}{\text{\raise 1.3pt \hbox{$\chi$}\kern-0.2pt}}

\newcommand{\mres}{\mathbin{\vrule height 2ex depth 2.2pt width
		0.12ex\vrule height -0.3ex depth 2.2pt width .5ex}}

\DeclareMathOperator{\dive}{div}

\DeclareMathOperator{\Var}{Var}

\begin{document}
	\title{A sharp lower bound for a class of non-local approximations of the total variation
		\footnote{{\bf 2020 Mathematics Subject Classification}: 26B30, 26A46
			\hfill \break {\it Keywords\,}: Function of bounded variation,
			nonlocal functional, Cantor part
	}}
	\author{Panu Lahti}
	
	\maketitle
	
	\begin{abstract}
		We study a class of non-local functionals that was introduced by 
		Brezis--Seeger--Van Schaftingen--Yung \cite{BSSY2},
		and can be used to characterize functions of bounded variation.
		We give a new lower bound for the liminf of these functionals,
		involving the three different parts of the total variation, with sharp coefficients.
	\end{abstract}
	
	\section{Introduction}
	
	In the past two decades,
	there has been widespread interest in characterizing Sobolev and BV (bounded variation)
	functions by means of non-local functionals;
	see e.g. \cite{ABBF,AGMP,BBM,Bre,BN18,BN16,Dav,FMS}.
	One of these non-local functionals is defined as follows.
	Let $n\in \N$ and let $\gamma\in\R$.
	Define the measure
	\[
	\nu_{\gamma}(A):=\iint_A |x-y|^{\gamma-n}\,dy\,dx,
		\quad A\subset \R^n\times \R^n.
	\]
	Let $\Om\subset \R^n$ be open. For a measurable function $f\colon \Om\to \R$,
	let
	\[
	E_{\gamma,\lambda}(f,\Om)
	:=\left\{(x,y)\in \Om\times \Om\colon
	|f(x)-f(y)|>\lambda|x-y|^{1+\gamma}\right\}, \quad \lambda \in (0,\infty),
	\]
	and then define the functional
		\begin{align*}
		F_{\gamma,\lambda}\left(f,\Om\right)
		:=\lambda\nu_\gamma (E_{\gamma,\lambda}(f,\Om)).
	\end{align*}
	This was studied in the case $\gamma=n$ in \cite{BVSY,Pol}, and then
	more generally for $\gamma\in\R$ in
	\cite{BSSY,BSSY2}.
	
	Denoting $x=(x_1,\ldots,x_n)\in \R^n$, let
	\[
	C_n:=\int_{\mathbb S^{n-1}}|x_1|\,d\mathcal H^{n-1}(x),
	\]
	where $\mathcal H^{n-1}$ is the $(n-1)$-dimensional Hausdorff measure,
	and $\mathbb S^{n-1}$ is the unit sphere.
	In response to a question posed 
	by Brezis--Seeger--Van Schaftingen--Yung  \cite{BSSY},
	very recently Picenni \cite{Pic} showed that if $f\in L^1_{\loc}(\R^n)$ with $\Var(f,\R^n)<\infty$,
	and $\gamma>0$, then
	\[
	\liminf_{\lambda \to\infty}
	F_{\gamma,\lambda}(f,\R^n)
	\ge \frac{C_n}{\gamma}|D^af|(\R^n)
	+\frac{C_n\gamma }{2(1+2\gamma)(1+\gamma)}|D^cf|(\R^n)
	+\frac{C_n}{\gamma+1}|D^jf|(\R^n),
	\]
	where $|D^af|$, $|D^cf|$, and $|D^jf|$ denote the absolutely continuous, Cantor, and jump parts
	of the variation measure.
	The proof in \cite{Pic} is based on first considering the case $n=1$, and
	analyzing the blow-up behavior of $f$ at $|Df|$-a.e. point in $\R$;
	for $|D^cf|$-a.e. point,
	this blow-up behavior has a less neat representation than for
	$|D^a f|$-a.e. point and $|D^j f|$-a.e. point,
	and this leads to the somewhat small coefficient in front of $|D^cf|(\R^n)$.
	
	We analyze the contribution of $|D^cf|(\R)$
	using a different, more ``global'' strategy.
	Then we generalize to higher dimensions in the usual way.
	The following is our main theorem.
	\begin{theorem}\label{thm:main}
	Let $\gamma>0$, let $\Om\subset \R^n$ be open, and let $f\in L^1_{\loc}(\Om)$ with
	$\Var(f,\Om)<\infty$. Then
	\[
		\liminf_{\lambda \to\infty}
		F_{\gamma,\lambda}(f,\Om)
		\ge \frac{C_n}{\gamma}|D^af|(\Om)
		+\frac{C_n}{\gamma+1}|D^cf|(\Om)
		+\frac{C_n}{\gamma+1}|D^jf|(\Om).
	\]
	\end{theorem}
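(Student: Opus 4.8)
The plan has three parts: reduce to $n=1$ by slicing; in one dimension, replace $f$ on small balls by a simple model (linear, a jump, or a monotone function) up to an error of negligible variation and run a De Giorgi--Letta/Vitali argument; and prove the one-dimensional lower bound for monotone functions by a \emph{global} multi-scale estimate, which is what produces the sharp Cantor coefficient.

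\textbf{Reduction to $n=1$.} For $\omega\in\mathbb{S}^{n-1}$ and $y\in\omega^{\perp}$ put $f^{\omega}_{y}(t):=f(y+t\omega)$ and $\Om^{\omega}_{y}:=\{t\in\R:y+t\omega\in\Om\}$. Writing the second variable of $F_{\gamma,\lambda}$ in polar coordinates around the first gives the exact identity
\[
F_{\gamma,\lambda}(f,\Om)=\frac12\int_{\mathbb{S}^{n-1}}\int_{\omega^{\perp}}F_{\gamma,\lambda}\bigl(f^{\omega}_{y},\Om^{\omega}_{y}\bigr)\,d\mathcal{H}^{n-1}(y)\,d\mathcal{H}^{n-1}(\omega),
\]
the inner functional being the one-dimensional one. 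Granting the theorem for $n=1$ (where $C_1=2$), Fatou's lemma together with the one-dimensional slicing formulas for $D^{a}f$, $D^{c}f$, $D^{j}f$ and the identity $\int_{\mathbb{S}^{n-1}}|\langle e,\omega\rangle|\,d\mathcal{H}^{n-1}(\omega)=C_n$ turns the right-hand side into exactly the asserted bound. So it remains to treat $n=1$.

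\textbf{One-dimensional setup.} Set $\mu(A):=\liminf_{\lambda\to\infty}F_{\gamma,\lambda}(f,A)$ for open $A\subset\Om$. Since $E_{\gamma,\lambda}(f,A\cup B)\supseteq E_{\gamma,\lambda}(f,A)\cup E_{\gamma,\lambda}(f,B)$, with the union disjoint when $A\cap B=\emptyset$, the set function $\mu$ is monotone and superadditive on disjoint open sets. Two further ingredients are used. First, a \emph{stability estimate}: if $f=f_1+f_2$ then, for $\eps\in(0,1)$, a pair active for $f_1$ at parameter $\lambda$ is active for $f$ at parameter $(1-\eps)\lambda$ unless the variation of $f_2$ over it exceeds $\eps\lambda|x-y|^{1+\gamma}$; estimating the exceptional pairs by applying the elementary bound $F_{\gamma,\lambda}(h,\cdot)\le C_{\gamma}\Var(h,\cdot)$ (cf.\ \cite{BSSY2}) to the indefinite-variation function of $f_2$, one gets
\[
\liminf_{\lambda\to\infty}F_{\gamma,\lambda}(f_1+f_2,A)\ \ge\ (1-\eps)\,\liminf_{\lambda\to\infty}F_{\gamma,\lambda}(f_1,A)\ -\ \frac{C_{\gamma}}{\eps}\,\Var(f_2,A).
\]
Second, three model computations on an interval $I$: $\liminf_{\lambda}F_{\gamma,\lambda}(\ell,I)=\tfrac{2}{\gamma}|\ell'|\,|I|$ for $\ell$ affine; $\liminf_{\lambda}F_{\gamma,\lambda}(s,I)=\tfrac{2}{\gamma+1}c$ for $s$ a single jump of size $c$ interior to $I$; and, the crucial one, $\liminf_{\lambda}F_{\gamma,\lambda}(g,I)\ge\tfrac{2}{\gamma+1}\Var(g,I)$ for $g$ monotone.

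\textbf{From models to the theorem.} At $|D^{a}f|$-a.e.\ $x_0$ the function $f$ is approximately differentiable with $f'(x_0)\neq0$ and $\Var(f-\ell_{x_0},B_r)=o(|D^{a}f|(B_r))$ on $B_r=(x_0-r,x_0+r)$, $\ell_{x_0}$ the tangent line; at $|D^{j}f|$-a.e.\ $x_0$ one has $f|_{B_r}=s_{x_0}+e$ with $s_{x_0}$ the jump model and $\Var(e,B_r)\to0=o(|D^{j}f|(B_r))$; and at $|D^{c}f|$-a.e.\ $x_0$, since $Df$ has asymptotically constant sign near $|Df|$-a.e.\ point, the Jordan decomposition of $f|_{B_r}$ is $g_{1}+e$ with $g_1$ monotone and $\Var(e,B_r)=o(|Df|(B_r))=o(|D^{c}f|(B_r))$. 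In each case the stability estimate and the relevant model computation give $\liminf_{r\to0}\mu(B_r)/|D^{\#}f|(B_r)\ge(1-\eps)\kappa_{\#}$ with $\kappa_{a}=\tfrac{2}{\gamma}$ and $\kappa_{c}=\kappa_{j}=\tfrac{2}{\gamma+1}$; letting $\eps\to0$ and using a Vitali covering together with the superadditivity of $\mu$ yields $\mu(A)\ge\kappa_{\#}|D^{\#}f|(A)$ for all open $A$. As $|D^{a}f|$, $|D^{c}f|$, $|D^{j}f|$ are mutually singular and $\mu$ is superadditive on disjoint open sets, a De Giorgi--Letta type lemma combines the three inequalities into $\mu\ge\tfrac{2}{\gamma}|D^{a}f|+\tfrac{2}{\gamma+1}|D^{c}f|+\tfrac{2}{\gamma+1}|D^{j}f|$, which is the case $n=1$.

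\textbf{The monotone estimate (main obstacle).} Everything thus rests on $\liminf_{\lambda}F_{\gamma,\lambda}(g,(a,b))\ge\tfrac{2}{\gamma+1}\Var(g,(a,b))$ for monotone $g$, where the improvement over \cite{Pic} lies: blowing $g$ up at a point of its own Cantor set produces only another monotone function, so the bound must be obtained globally. The plan is to decompose $(a,b)$ dyadically and use, at each scale $2^{-k}$, the pairs $(x,y)$ straddling the increment of $g$ across a cell, which become active once that increment exceeds a fixed multiple of $\lambda 2^{-k(1+\gamma)}$. No single scale contributes a definite amount as $\lambda\to\infty$; but iterating the self-similar splitting $F_{\gamma,\lambda}(g,(a,b))=\sum_{C}F_{\gamma,\lambda}(g,C)+(\text{cross terms})$ over scales expresses $F_{\gamma,\lambda}(g,(a,b))$ as a convergent sum of cross terms over all scales, while the scale-$k$ increments of $g$ telescope to $\Var(g,(a,b))$; summing the scale-by-scale estimates then recovers precisely the jump constant $\tfrac{2}{\gamma+1}$. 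The main difficulty is to carry out this bookkeeping with sharp constants --- in particular to control the overlap between pairs active at different scales and the boundary and cross contributions --- so that nothing is lost as $\lambda\to\infty$.
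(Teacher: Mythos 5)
Your overall architecture matches the paper's: reduce to $n=1$ by the polar/slicing identity, treat the three parts of $Df$ separately using a stability estimate plus model computations, and observe that the whole game is the monotone lower bound
\[
\liminf_{\lambda\to\infty}F_{\gamma,\lambda}(g,(a,b))\ \ge\ \frac{2}{\gamma+1}\,\Var\bigl(g,(a,b)\bigr)\qquad\text{for $g$ monotone},
\]
which is indeed exactly the content of the paper's Theorem~\ref{thm:1d case preli}. The slicing step, the stability estimate, and the use of $F_{\gamma,\lambda}\le C'\Var$ from \cite{BSSY} are all correct and are used in the paper in essentially the same way (the paper proves $\liminf F\ge\frac{2}{\gamma+1}|Df|$ globally by a Vitali covering with intervals on which $Df$ has nearly constant sign, proves $\liminf F\ge\frac{2}{\gamma}|D^af|$ by a direct pointwise argument, and then combines via an open set $W$ that concentrates the singular part; your point-by-point blow-up plus De~Giorgi--Letta would achieve the same thing with a little more machinery).

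The genuine gap is that you do not prove the monotone estimate. You only sketch a dyadic multi-scale scheme and then write, verbatim, that ``the main difficulty is to carry out this bookkeeping with sharp constants \dots\ so that nothing is lost as $\lambda\to\infty$'' --- but that bookkeeping \emph{is} the theorem. As written, the sketch is not obviously repairable: the pairs $(x,y)$ that ``straddle'' a dyadic cell at scale $k$ are not disjoint from those active at scale $k+1$, so cross terms at different scales overlap and cannot simply be summed; moreover, the increment of a general monotone $g$ across a cell does not behave like the single-jump model, so it is not clear that each scale contributes the jump constant $\frac{2}{\gamma+1}$ times its increment, nor that a naive sum would not double-count and lose a constant. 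The paper's actual proof proceeds by a completely different mechanism: approximate $g$ uniformly by step functions $\frac1M\sum_{j=1}^M\mathbbm{1}_{[a_j,\infty)}$, and show by induction on the number of steps that adding one more step increases $F_{\gamma,\lambda}$ by at least $\frac{2}{(\gamma+1)M}$. The inductive step compares two ``second differences'' $A_1$ and $A_2$ (equation \eqref{eq:A1 A2}), reduces $A_1-A_2$ to a combination of explicit integrals over the strip $\{x\le a_1,\ y\ge a_{k+1}\}$, and shows $A_1-A_2\ge0$ in three regimes using the concavity of $t\mapsto t^{\gamma/(1+\gamma)}$ together with a concavity-in-$(a_{k+1}-a_1)$ observation; a Fatou argument then passes from step functions to general monotone $g$. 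This telescoping-of-steps structure, not a dyadic decomposition in space, is what makes the constant come out sharp, and it is absent from your proposal.

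Two smaller remarks. First, $C_1=2$, as you use, is correct. Second, your stability estimate $\liminf_\lambda F_{\gamma,\lambda}(f_1+f_2,A)\ge(1-\eps)\liminf_\lambda F_{\gamma,\lambda}(f_1,A)-\frac{C_\gamma}{\eps}\Var(f_2,A)$ follows from the inclusion $E_{\gamma,(1+\eps)\lambda}(f_1)\subset E_{\gamma,\lambda}(f_1+f_2)\cup E_{\gamma,\eps\lambda}(f_2)$ exactly as in the paper's proof of Proposition~\ref{thm:1d case variation}, so that ingredient is fine.
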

	
	Picenni \cite{Pic} also shows that for $f\in L^1_{\loc}(\R^n)$ with
	$\Var(f,\R^n)<\infty$ and $|D^c f|(\R^n)=0$, we have
	\begin{equation}\label{eq:SBV limit}
	\lim_{\lambda \to\infty}
	F_{\gamma,\lambda}(f,\R^n)
	= \frac{C_n}{\gamma}|D^af|(\R^n)
	+\frac{C_n}{\gamma+1}|D^jf|(\R^n).
	\end{equation}
	This implies that the coefficients  of $|D^af|(\Om)$ and $|D^jf|(\Om)$ 
	in Theorem \ref{thm:main} are sharp.
	In Example \ref{ex:sharpness} we show that also the coefficient of $|D^cf|(\Om)$ is sharp,
	and that the limit $\lim_{\lambda \to\infty}
	F_{\gamma,\lambda}(f,\Om)$ may fail to exist when $|D^c f|(\Om)>0$.
	
	\section{Preliminaries}\label{sec:definitions}
	
	We work in the Euclidean space $\R^n$ with $n\ge 1$.
	We denote the $n$-dimensional Lebesgue outer measure by $\mathcal L^n$.
	We denote the characteristic function of a set $A\subset\R^n$ by $\mathbbm{1}_A\colon \R^n\to \{0,1\}$.
	
	The theory of $\BV$ functions that we rely on can be found in the monograph
	Ambrosio--Fusco--Pallara \cite{AFP}.
	By $\Om$ we always denote an open subset of $\R^n$.
	A function
	$f\in L^1(\Omega)$ is of bounded variation,
	denoted $f\in \BV(\Omega)$, if its weak derivative
	is an $\R^{n}$-valued Radon measure with finite total variation. This means that
	there exists a (unique) Radon measure $Df$
	such that for all $\varphi\in C_c^1(\Omega)$, the integration-by-parts formula
	\[
	\int_{\Omega}f\frac{\partial\varphi}{\partial x_k}\,d\mathcal L^n
	=-\int_{\Omega}\varphi\,d(Df)_k,\quad k=1,\ldots,n,
	\]
	holds.
	The total variation of $Df$ is denoted by $|Df|$.
	If we do not know a priori that a mapping $f\in L^1_{\loc}(\Om)$
	is a BV function, we consider
	\[
	\Var(f,\Om):=\sup\left\{\int_{\Om}f\dive\varphi\,d\mathcal L^n,\,\varphi\in C_c^{1}(\Om;\R^{n}),
	\,|\varphi|\le 1\right\}.
	\]
	If $\Var(f,\Om)<\infty$, then the $\R^{n}$-valued Radon measure $Df$
	exists and $\Var(f,\Om)=|Df|(\Om)$
	by the Riesz representation theorem, and $f\in\BV(\Om)$ provided that $f\in L^1(\Om)$.
	
	Denote by $S_f\subset \Om$ the set of non-Lebesgue points of $f\in \BV_{\loc}(\Om)$.
	We write the Radon-Nikodym decomposition of the variation measure of $f$ into the absolutely continuous and singular parts with respect to $\mathcal L^n$
	as
	\[
	Df=D^a f+D^s f.
	\]
	Furthermore, we define the Cantor and jump parts of $Df$ as
	\[
	D^c f\coloneqq  D^s f\mres (\Om\setminus S_f),\qquad D^j f\coloneqq D^s f\mres S_f.
	\]
	Here
	\[
	D^s f \mres S_f(A):=D^s f (S_f\cap A),\quad \textrm{for } D^s f\textrm{-measurable } A\subset \Om.
	\]
	Thus we get the decomposition
	\[
	Df=D^a f+ D^c f+ D^j f.
	\]

	\section{Proof of Theorem \ref{thm:main}}
	
	We start with the following simple lemma.
	
	\begin{lemma}\label{lem:uniform convergence}
		Let $\gamma,\lambda>0$. Let $\Om\subset\R$ be open and let $f\colon \Om\to \R$ 
		be a measurable function such that
		there is a bounded set $W\subset \Om$ and
		$\lambda_0<\lambda$ for which we have
		\begin{equation}\label{eq:contained in W W}
		E_{\gamma,\lambda_0}(f',\Om)\subset W\times W.
		\end{equation}
	for all $f'$ with $\Vert f'-f\Vert_{L^{\infty}(\Om)}$ sufficiently small.
	Moreover, assume that
	\begin{equation}\label{eq:lambda point assumption}
	\nu_{\gamma}(\left\{(x,y)\in \Om\times \Om\colon
	|f(x)-f(y)|=\lambda|x-y|^{1+\gamma}\right\})=0.
	\end{equation}
	Then
	\[
	\lim_{\delta\to 0}
	\sup_{f^*\colon \Vert f^*-f\Vert_{L^{\infty}(\Om)}\le\delta}
	\left|F_{\gamma,\lambda}(f^*,\Om)-F_{\gamma,\lambda}(f,\Om)\right| =0,
	\]
	where the supremum is taken over measurable functions $f^*$ on $\Om$ for which
	$\Vert f^*-f\Vert_{L^{\infty}(\Om)}\le\delta$.
	\end{lemma}
	\begin{proof}
		Assume by contradiction that there exists a sequence of measurable functions $f^i$
		on $\Om$ such that $\Vert f^i-f\Vert_{L^{\infty}(\Om)}\to 0$ as $i\to\infty$, but
		$\left|F_{\gamma,\lambda}(f^i,\Om)-F_{\gamma,\lambda}(f,\Om)\right|>\eps>0$ for all
		$i\in\N$.
		 
		Let $\lambda'<\lambda$.
		Then
		$E_{\gamma,\lambda}(f^i,\Om)\setminus E_{\gamma,\lambda'}(f,\Om)\to\emptyset$ as $i\to\infty$,
		and so by choosing $\lambda'$ to be in the interval $(\lambda_0,\lambda)$,
		using \eqref{eq:contained in W W} we have
		\[
		\lim_{i\to\infty}\nu_\gamma(E_{\gamma,\lambda}(f^i,\Om)\setminus E_{\gamma,\lambda'}(f,\Om))=0.
		\]
		Then
		\[
		\lambda\nu_\gamma(E_{\gamma,\lambda'}(f,\Om))
			\ge\limsup_{i\to\infty}\lambda\nu_\gamma(E_{\gamma,\lambda}(f^i,\Om))
			=\limsup_{i\to\infty}F_{\gamma,\lambda}(f^i,\Om).
		\]
		Using the assumption
		\eqref{eq:lambda point assumption}, we get
		\[
		\lim_{\lambda'\nearrow \lambda}\nu_\gamma(E_{\gamma,\lambda'}(f,\Om))=\nu_\gamma(E_{\gamma,\lambda}(f,\Om)).
		\]
		Combining, we get
		\[
		F_{\gamma,\lambda}(f,\Om)
		\ge \limsup_{i\to\infty}F_{\gamma,\lambda}(f^i,\Om).
		\]
		
		Then let $\lambda'>\lambda$.
		Now
		$E_{\gamma,\lambda'}(f,\Om)\setminus E_{\gamma,\lambda}(f^i,\Om)\to\emptyset$ as $i\to\infty$,
		and so by \eqref{eq:contained in W W}, we have
		\[
		\lim_{i\to\infty}\nu_\gamma(E_{\gamma,\lambda'}(f,\Om)\setminus E_{\gamma,\lambda}(f^i,\Om))=0.
		\]
		Then
		\[
		\lambda\nu_\gamma(E_{\gamma,\lambda'}(f,\Om))
		\le\liminf_{i\to\infty}\lambda\nu_\gamma(E_{\gamma,\lambda}(f^i,\Om))
		=\liminf_{i\to\infty}F_{\gamma,\lambda}(f^i,\Om).
		\]
		We also have
		\[
		\lim_{\lambda'\searrow \lambda}\nu_\gamma(E_{\gamma,\lambda'}(f,\Om))=\nu_\gamma(E_{\gamma,\lambda}(f,\Om)).
		\]
		Combining, we get
		\[
		F_{\gamma,\lambda}(f,\Om)
		\le\liminf_{i\to\infty}F_{\gamma,\lambda}(f^i,\Om),
		\]
		giving a contradiction and proving the result.
	\end{proof}
	
	The following theorem is the crux of our argument.
	
	\begin{theorem}\label{thm:1d case preli}
		Let $-\infty<b_1\le b_2<\infty$ and $-\infty<L_1\le L_2<\infty$, and let 
		$f$ be increasing with $f(x)=b_1$ for $x\le L_1$ and $f(x)=b_2$ for $x\ge L_2$.
		Let $\gamma,\lambda>0$.
		Then
		\[
		F_{\gamma,\lambda}(f,\R)\ge \frac{2(b_2-b_1)}{\gamma+1}.
		\]
	\end{theorem}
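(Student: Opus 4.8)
The plan is to (i) reduce to step functions, (ii) telescope $F_{\gamma,\lambda}(f,\R)$ over their jumps, and (iii) prove a one–dimensional inequality bounding from below the contribution of one jump. For (i), note that on increasing functions $F_{\gamma,\lambda}(\cdot,\R)$ is monotone: if $f_1\le f_2$ are increasing then $E_{\gamma,\lambda}(f_1,\R)\subseteq E_{\gamma,\lambda}(f_2,\R)$, hence $F_{\gamma,\lambda}(f_1,\R)\le F_{\gamma,\lambda}(f_2,\R)$. Given $f$ as in the statement, choose step functions $f_k$ with finitely many jumps such that $f_k\uparrow f$ pointwise, each $f_k$ is constant outside a compact set with range $[b_1,b_2^{(k)}]$ and $b_2^{(k)}\uparrow b_2$ (e.g. place a jump of height $2^{-k}(b_2-b_1)$ at $\sup\{x:f(x)<b_1+j2^{-k}(b_2-b_1)\}$ for $1\le j\le 2^k-1$). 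Then $F_{\gamma,\lambda}(f,\R)\ge\sup_kF_{\gamma,\lambda}(f_k,\R)$, so it suffices to treat step functions.

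For (ii), let $f$ be a step function with jumps $\sigma_1,\dots,\sigma_m>0$ at $p_1<\dots<p_m$, $\sum_i\sigma_i=b_2-b_1$, and for $0\le k\le m$ let $g_k$ be the step function carrying only the jumps $\sigma_1,\dots,\sigma_k$ at $p_1,\dots,p_k$. Then $g_0$ is constant, $g_m=f$, each $F_{\gamma,\lambda}(g_k,\R)$ is finite (the set $E_{\gamma,\lambda}(g_k,\R)$ is bounded and $\nu_\gamma$ is finite on bounded sets for $\gamma>0$), and $F_{\gamma,\lambda}(f,\R)=\sum_{k=1}^m\big(F_{\gamma,\lambda}(g_k,\R)-F_{\gamma,\lambda}(g_{k-1},\R)\big)$. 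Hence the theorem reduces to the Key Lemma: if $g$ is increasing and bounded, constant on $[p,\infty)$, with a jump of size $\sigma>0$ at $p$, and $g':=g-\sigma\mathbbm{1}_{[p,\infty)}$, then $F_{\gamma,\lambda}(g,\R)-F_{\gamma,\lambda}(g',\R)\ge\tfrac{2\sigma}{\gamma+1}$ (applied with $g=g_k$, $g'=g_{k-1}$, $p=p_k$).

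To prove the Key Lemma, observe that for $x<y$ one has $g(y)-g(x)=g'(y)-g'(x)$ unless $x<p\le y$; and for $x<p\le y$, since $g$ is constant on $[p,\infty)$, $g(y)-g(x)=[g(p^-)-g(x)]+\sigma$, where $D(q):=g(p^-)-g(p-q)$ depends only on $q=p-x$ and is bounded nondecreasing on $[0,\infty)$ with $D(0^+)=0$. Thus only pairs $x<p\le y$ with $\lambda(y-x)^{1+\gamma}\in[D(p-x),D(p-x)+\sigma)$ contribute to the difference; substituting $q=p-x$, $u=y-x\ge q$, putting $\alpha(q):=(D(q)/\lambda)^{1/(1+\gamma)}$, $\beta(q):=((D(q)+\sigma)/\lambda)^{1/(1+\gamma)}$ (so $\beta(q)^{1+\gamma}-\alpha(q)^{1+\gamma}=\sigma/\lambda=:\rho^{1+\gamma}$), the admissible $u$ form the interval $[\max(q,\alpha(q)),\beta(q))$, and the $u$-integral yields
\begin{equation*}
F_{\gamma,\lambda}(g,\R)-F_{\gamma,\lambda}(g',\R)=\frac{2\lambda}{\gamma}\int_0^\infty\big[\beta(q)^\gamma-\max(q,\alpha(q))^\gamma\big]^+\,dq .
\end{equation*}
Since $\sigma/\lambda=\rho^{1+\gamma}$, the Key Lemma is equivalent to
\begin{equation*}
\int_0^\infty\big[\beta(q)^\gamma-\max(q,\alpha(q))^\gamma\big]^+\,dq\ \ge\ \frac{\gamma}{\gamma+1}\,\rho^{1+\gamma}
\tag{$\star$}
\end{equation*}
for every nondecreasing $\alpha\colon(0,\infty)\to[0,\infty)$ with $\alpha(0^+)=0$, where $\beta:=(\alpha^{1+\gamma}+\rho^{1+\gamma})^{1/(1+\gamma)}$.

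Inequality $(\star)$ is the heart of the argument and the step I expect to be genuinely delicate. It is sharp: equality holds for $\alpha\equiv0$ (then the integral is $\int_0^\rho(\rho^\gamma-q^\gamma)\,dq=\tfrac{\gamma}{\gamma+1}\rho^{1+\gamma}$) and also in the limit where $\alpha$ is a large constant on $(\delta,\infty)$ as $\delta\downarrow0$ — the cases of a jump isolated from, respectively immediately adjacent to, a large jump. To prove $(\star)$ in general I would pass to the layer–cake form $\int_0^\infty\gamma t^{\gamma-1}\,\mathcal L^1\big(\{q>0:q<t,\ \alpha(q)<t,\ t<\beta(q)\}\big)\,dt$, note that $t<\beta(q)$ means $\alpha(q)>\tau(t):=(t^{1+\gamma}-\rho^{1+\gamma})^{1/(1+\gamma)}$ for $t>\rho$, rewrite the measure of that set through the generalized inverse $Q(t):=\sup\{q:\alpha(q)<t\}$ of $\alpha$, and combine the substitution $t\mapsto\tau^{-1}(t)=(t^{1+\gamma}+\rho^{1+\gamma})^{1/(1+\gamma)}$ with the elementary bound $b^\gamma-a^\gamma\ge\tfrac{\gamma}{1+\gamma}\tfrac{b^{1+\gamma}-a^{1+\gamma}}{b}$ for $0\le a<b$. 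The care needed is twofold: $\alpha$ (hence $Q$) is only nondecreasing, so one must argue with generalized inverses and control its flat pieces and jumps; and once $Q$ takes the value $+\infty$ — which happens, as $D$ is bounded — several of the natural intermediate integrals are individually infinite even though their difference, and $(\star)$ itself, remain finite, so the cancellation has to be kept local throughout. Once $(\star)$ holds, the Key Lemma, the telescoping and the reduction to step functions give $F_{\gamma,\lambda}(f,\R)\ge\sum_k\tfrac{2\sigma_k}{\gamma+1}=\tfrac{2(b_2-b_1)}{\gamma+1}$.
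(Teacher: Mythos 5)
Your overall strategy is genuinely different from the paper's. The paper proves the step-function case by working with \emph{equal-height} steps $f=\tfrac1M\sum_j\mathbbm 1_{[a_j,\infty)}$ and establishing the comparison $A_1\ge A_2$ (the gain from the $(k{+}1)$-th step only grows if the profile is flattened to the left), which it proves by a three-case argument together with concavity of $t\mapsto t^{\gamma/(1+\gamma)}$; this is then used inductively. You instead telescope over arbitrary jump sizes and reduce everything to the single integral inequality $(\star)$. Your reformulation of the ``Key Lemma'' as $(\star)$ is correct and the sharp cases you identify ($\alpha\equiv 0$ and the limit of a large constant) are exactly the two extremes that force the constant $\tfrac{\gamma}{\gamma+1}\rho^{1+\gamma}$. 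This is an attractive route, but as written it has two gaps, one of them a genuine error.

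First, the reduction to step functions in step (i) is wrong. You claim that if $f_1\le f_2$ are both increasing then $E_{\gamma,\lambda}(f_1,\R)\subseteq E_{\gamma,\lambda}(f_2,\R)$. That would require $f_2-f_1$ to be \emph{increasing}, not merely nonnegative: for $x<y$ one needs $f_1(y)-f_1(x)\le f_2(y)-f_2(x)$. A simple counterexample is $f_1=\mathbbm 1_{[0,\infty)}$, $f_2\equiv 1$: both increasing, $f_1\le f_2$, yet $E_{\gamma,\lambda}(f_1,\R)\ne\emptyset$ while $E_{\gamma,\lambda}(f_2,\R)=\emptyset$. The failure is not peripheral to your construction either: the approximants $f_k$ you build have jumps of height $2^{-k}(b_2-b_1)$ at points where $f$ may be continuous, so for $x,y$ straddling such a point one has $|f_k(x)-f_k(y)|=2^{-k}(b_2-b_1)$ while $|f(x)-f(y)|$ can be arbitrarily small; these pairs lie in $E_{\gamma,\lambda}(f_k,\R)\setminus E_{\gamma,\lambda}(f,\R)$. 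So $F_{\gamma,\lambda}(f,\R)\ge\sup_kF_{\gamma,\lambda}(f_k,\R)$ does not follow. The paper avoids this by approximating in $L^\infty$, replacing the strict inequality $>\lambda$ with $\ge\lambda_0$ for some $\lambda_0>\lambda$, applying (reverse) Fatou to the indicator functions, and letting $\lambda_0\downarrow\lambda$ at the end. Your argument needs the same kind of fix; monotonicity alone is not available.

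Second, and more substantially, $(\star)$ — which you yourself call ``the heart of the argument'' and ``the step I expect to be genuinely delicate'' — is only sketched. The layer-cake rewriting, the generalized inverse $Q$, the substitution $t\mapsto(t^{1+\gamma}+\rho^{1+\gamma})^{1/(1+\gamma)}$, and the bound $b^\gamma-a^\gamma\ge\tfrac{\gamma}{1+\gamma}\tfrac{b^{1+\gamma}-a^{1+\gamma}}{b}$ are all sensible ingredients, and I checked that $(\star)$ holds in a couple of nontrivial examples, so the inequality is almost certainly true. But you correctly flag two real obstacles (handling the flats and jumps of a merely nondecreasing $\alpha$, and the appearance of infinite intermediate integrals once $Q=+\infty$), and you do not resolve them. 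As it stands this is a plausible plan, not a proof. For comparison, the paper sidesteps the continuum inequality entirely: by working with equal-height steps it only ever needs the discrete second-difference inequality $A_1-A_2\ge 0$, which reduces — via formula \eqref{eq:general k} and concavity in the gap $a_{k+1}-a_1$ — to endpoint cases that can be checked directly. If you want to keep your route, you would need to actually prove $(\star)$; alternatively, your telescoping framework combined with the paper's discrete comparison could be a clean hybrid.
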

	\begin{proof}
	If $b_1=b_2$ then both sides are zero, and so we can assume that
	$b_1<b_2$. By scaling, we can assume that $b_1=0$ and $b_2=1$.
		
	Observe that
	\begin{equation}\label{eq:finiteness}
			F_{\gamma,\lambda}(f,\R)
			=\lambda\nu_\gamma (E_{\gamma,\lambda}(f,\R))
			\le \lambda \nu_\gamma
			([L_1-\lambda^{-1/(1+\gamma)},\,L_2+\lambda^{-1/(1+\gamma)}]^2)<\infty.
	\end{equation}
		
	First suppose that for some $M\in\N$ and for some numbers $-\infty<a_1<\ldots<a_M<\infty $, we have
	\begin{equation}\label{eq:fM form}
	f(x)= \frac{1}{M}\sum_{j=1}^M \mathbbm{1}_{[a_j,\infty)}=:\sum_{j=1}^M f_j.
	\end{equation}
	We compute
	\begin{align*}
	&\lambda \nu_\gamma\left(\left\{(x,y)\in \R\times \R\colon
	\frac{|f_1(x)-f_1(y)|}{|x-y|^{1+\gamma}}>\lambda \right\}\right)\\
	&\quad  = 2\lambda \int_{a_1- (M\lambda )^{-1/(1+\gamma)}}^{a_1}\int_{a_1}^{(M\lambda )^{-1/(1+\gamma)}+x}(y-x)^{\gamma-1}\,dy\,dx\\
	&\quad  = \frac{2\lambda }{\gamma}\int_{a_1- (M\lambda )^{-1/(1+\gamma)}}^{a_1} \left[(M\lambda )^{-\gamma/(1+\gamma)}-(a_1-x)^{\gamma}
	\right]\,dx\\
	&\quad  = \frac{2\lambda }{\gamma}\left[(M\lambda )^{-1}-\frac{1}{\gamma+1}(M\lambda )^{-1}\right]\\
	& \quad = \frac{2}{\gamma+1}\frac{1}{M}.
	\end{align*}
This of course holds generally:
\begin{equation}\label{eq:one step function}
	\lambda \nu_\gamma\left(\left\{(x,y)\in \R\times \R\colon
	\frac{|f_j(x)-f_j(y)|}{|x-y|^{1+\gamma}}>\lambda \right\}\right)
	= \frac{2}{\gamma+1}\frac{1}{M}
\end{equation}
for all $j=1,\ldots,M$.
	Fix $k\in \{1,\ldots, M-1\}$. We consider the two quantities
	\[
	A_1:=F_{\gamma,\lambda}\left(\sum_{j=1}^{k+1} f_j,\R\right)
	-F_{\gamma,\lambda}\left(\sum_{j=1}^{k} f_j,\R\right)
	\]
	and
	\[
	A_2:=F_{\gamma,\lambda}\left(\sum_{j=1}^{k+1} f_j+M^{-1}\mathbbm{1}_{(-\infty,a_1)},\R\right)
	-F_{\gamma,\lambda}\left(\sum_{j=1}^{k} f_j+M^{-1}\mathbbm{1}_{(-\infty,a_1)},\R\right).
	\]
	Note that all of the terms are finite, as can be seen just as in \eqref{eq:finiteness}.
	We have
	\begin{align*}
	&A_1-A_2\\
	&\quad =\lambda \nu_\gamma\left(\left\{(x,y)\in \R\times \R\colon
\frac{|\sum_{j=1}^{k+1} f_j(x)-\sum_{j=1}^{k+1} f_j(y)|}{|x-y|^{1+\gamma}}>\lambda \right\}\right)\\
&\qquad -\lambda \nu_\gamma\left(\left\{(x,y)\in \R\times \R\colon
\frac{|\sum_{j=1}^{k} f_j(x)-\sum_{j=1}^{k} f_j(y)|}{|x-y|^{1+\gamma}}>\lambda \right\}\right)\\
&\qquad -\lambda \nu_\gamma\left(\left\{(x,y)\in \R\times \R\colon
\frac{|\sum_{j=2}^{k+1} f_j(x)-\sum_{j=2}^{k+1} f_j(y)|}{|x-y|^{1+\gamma}}>\lambda \right\}\right)\\
&\qquad +\lambda \nu_\gamma\left(\left\{(x,y)\in \R\times \R\colon
\frac{|\sum_{j=2}^{k} f_j(x)-\sum_{j=2}^{k} f_j(y)|}{|x-y|^{1+\gamma}}>\lambda \right\}\right)\\
	&\quad =2\lambda \nu_\gamma\left(\left\{x\le a_1,\,y\ge a_{k+1}\colon
	\frac{|\sum_{j=1}^{k+1} f_j(x)-\sum_{j=1}^{k+1} f_j(y)|}{|x-y|^{1+\gamma}}>\lambda \right\}\right)\\
	&\qquad -2\lambda \nu_\gamma\left(\left\{x\le a_1,\,y\ge a_{k+1}\colon
	\frac{|\sum_{j=1}^{k} f_j(x)-\sum_{j=1}^{k} f_j(y)|}{|x-y|^{1+\gamma}}>\lambda \right\}\right)\\
	&\qquad -2\lambda \nu_\gamma\left(\left\{x\le a_1,\,y\ge a_{k+1}\colon
	\frac{|\sum_{j=2}^{k+1} f_j(x)-\sum_{j=2}^{k+1} f_j(y)|}{|x-y|^{1+\gamma}}>\lambda \right\}\right)\\
	&\qquad +2\lambda \nu_\gamma\left(\left\{x\le a_1,\,y\ge a_{k+1}\colon
	\frac{|\sum_{j=2}^{k} f_j(x)-\sum_{j=2}^{k} f_j(y)|}{|x-y|^{1+\gamma}}>\lambda \right\}\right),
	\end{align*}
because all of the other ``interactions'' between $x$ and $y$ cancel out.
Thus
\begin{equation}\label{eq:A1 A2}
	\begin{split}
			&A_1-A_2\\
	&\quad =2\lambda \nu_\gamma\left(\left\{x\le a_1,\,y\ge a_{k+1}\colon
	\frac{(k+1)/M}{|x-y|^{1+\gamma}}>\lambda \right\}\right)\\
	&\qquad -4\lambda \nu_\gamma\left(\left\{x\le a_1,\,y\ge a_{k+1}\colon
	\frac{k/M}{|x-y|^{1+\gamma}}>\lambda \right\}\right)\\
	&\qquad +2\lambda \nu_\gamma\left(\left\{x\le a_1,\,y\ge a_{k+1}\colon
	\frac{(k-1)/M}{|x-y|^{1+\gamma}}>\lambda \right\}\right).
	\end{split}
\end{equation}
	If $l\in\N$ and $(l/(M\lambda ))^{1/(1+\gamma)}\ge a_{k+1}-a_1$, then we can calculate
	\begin{align*}
		&\nu_\gamma\left(\left\{x\le a_1,\,y\ge a_{k+1}\colon
		\frac{l/M}{|x-y|^{1+\gamma}}>\lambda \right\}\right)\\
		&\quad =\int_{  a_{k+1}-(l/(M\lambda ))^{1/(1+\gamma)}  }^{a_1}
		\int_{a_{k+1}}^{(l/(M\lambda ))^{1/(1+\gamma)}+x}
		(y-x)^{\gamma-1}\,dy\,dx\\
		&\quad = \frac{1}{\gamma}\int_{  a_{k+1}-(l/(M\lambda ))^{1/(1+\gamma)}  }^{a_1}
		\left[(l/(M\lambda ))^{\gamma/(1+\gamma)}-(a_{k+1}-x)^{\gamma}\right]\,dx\\
		&\quad = \frac {1}{\gamma} (a_1-a_{k+1}+(l/(M\lambda ))^{1/(1+\gamma)})
		(l/(M\lambda ))^{\gamma/(1+\gamma)}\\
		&\qquad +\frac {1}{\gamma(\gamma+1)}\left[
		(a_{k+1}-a_1)^{\gamma+1}-l/(M\lambda )\right]\\
		&\quad = -\frac{1}{\gamma}(a_{k+1}-a_1)(l/(M\lambda ))^{\gamma/(1+\gamma)}
		+\frac{l/(M\lambda) }{\gamma+1}+\frac {1}{\gamma(\gamma+1)}(a_{k+1}-a_1)^{\gamma+1}.
	\end{align*}
	We record this fact: if $(l/(M\lambda ))^{1/(1+\gamma)}\ge a_{k+1}-a_1$, then
	\begin{equation}\label{eq:general k}
		\begin{split}
			&\nu_\gamma\left(\left\{x\le a_1,\,y\ge a_{k+1}\colon
			\frac{l/M}{|x-y|^{1+\gamma}}>\lambda \right\}\right)\\
			&= \frac{l/(M\lambda) }{\gamma+1}
			-\frac{1}{\gamma}(a_{k+1}-a_1)(l/(M\lambda ))^{\gamma/(1+\gamma)}
			+\frac {1}{\gamma(\gamma+1)}(a_{k+1}-a_1)^{\gamma+1}.
		\end{split}
	\end{equation}

	Now we estimate $A_1-A_2$; we consider three cases according to how far the points $a_1$ and $a_{k+1}$ are from each other.\\
	
	\textbf{Case 1.}\\
	First consider the case $((k-1)/(M\lambda ))^{1/(1+\gamma)}\ge a_{k+1}-a_1$.
	Now \eqref{eq:general k} is applicable with $l=k-1,k,k+1$.
	By \eqref{eq:A1 A2},
	\begin{align*}
	 &A_1-A_2\\
	 &=	\frac{2\lambda }{\gamma}(a_{k+1}-a_1)\left[
	 -((k+1)/(M\lambda ))^{\gamma/(1+\gamma)}
	 +2(k/(M\lambda ))^{\gamma/(1+\gamma)}
	 -((k-1)/(M\lambda ))^{\gamma/(1+\gamma)}\right]\\
	&\ge 0
\end{align*}
by concavity of the function $t\mapsto t^{\gamma/(1+\gamma)}$.\\

	\textbf{Case 2.}\\
	Then consider the case $(k/(M\lambda ))^{1/(1+\gamma)}\le a_{k+1}-a_1$.
	From \eqref{eq:A1 A2},
	\[
	A_1-A_2	 =2\lambda \nu_\gamma\left(\left\{x\le a_1,\,y\ge a_{k+1}\colon
	\frac{(k+1)/M}{|x-y|^{1+\gamma}}>\lambda \right\}\right)
	\ge 0.\\
	\]
	
	\textbf{Case 3.}\\
	Then consider the case $((k-1)/(M\lambda ))^{1/(1+\gamma)}\le a_{k+1}-a_1\le (k/(M\lambda ))^{1/(1+\gamma)}$.
	From \eqref{eq:A1 A2} and \eqref{eq:general k},
	\begin{align*}
	&\frac{A_1-A_2}{2\lambda}\\
	&\quad	 = \nu_\gamma\left(\left\{x\le a_1,\,y\ge a_{k+1}\colon
	\frac{(k+1)/M}{|x-y|^{1+\gamma}}>\lambda \right\}\right)\\
	&\qquad -2 \nu_\gamma\left(\left\{x\le a_1,\,y\ge a_{k+1}\colon
	\frac{k/M}{|x-y|^{1+\gamma}}>\lambda \right\}\right)\\
	&\quad=\frac{(1-k)/(M\lambda )}{\gamma+1}
	 +\frac{1}{\gamma}(a_{k+1}-a_1)\left[ 2(k/(M\lambda ))^{\gamma/(1+\gamma)}-((k+1)/(M\lambda ))^{\gamma/(1+\gamma)}\right]
	 \\
	&\qquad  -\frac{1}{\gamma(1+\gamma)}(a_{k+1}-a_1)^{1+\gamma}.
	\end{align*}
Note that this is a concave function of the quantity
$(a_{k+1}-a_1)$.
Thus it is at its smallest at one of the end points 
$a_{k+1}-a_1=((k-1)/(M\lambda ))^{1/(1+\gamma)}$, $a_{k+1}-a_1= (k/(M\lambda ))^{1/(1+\gamma)}$.
But these cases were already handled in Cases 1 and 2. Thus again
\[
A_1-A_2\ge 0.
\]
In total, we always have $A_1\ge A_2$.
Using this inductively, we get
\begin{align*}
&F_{\gamma,\lambda}\left(\sum_{j=1}^{k+1} f_j,\R\right)
-F_{\gamma,\lambda}\left(\sum_{j=1}^{k} f_j,\R\right)\\
&\ge F_{\gamma,\lambda}\left(\sum_{j=1}^{k+1} f_j+M^{-1}\mathbbm{1}_{(-\infty,a_1)},\R\right)
-F_{\gamma,\lambda}\left(\sum_{j=1}^{k} f_j+M^{-1}\mathbbm{1}_{(-\infty,a_1)},\R\right)\\
&\ge \ldots\\
&\ge F_{\gamma,\lambda}\left(\sum_{j=1}^{k+1} f_j+M^{-1}\sum_{j=1}^{k}\mathbbm{1}_{(-\infty,a_j)},\R\right)
-F_{\gamma,\lambda}\left(\sum_{j=1}^{k} f_j+M^{-1}\sum_{j=1}^{k}\mathbbm{1}_{(-\infty,a_j)},\R\right)\\
&= F_{\gamma,\lambda}\left(f_{k+1}+k/M,\R\right)
-F_{\gamma,\lambda}\left(k/M,\R\right)\\
&= F_{\gamma,\lambda}\left(f_{k+1},\R\right)\\
&=\frac{2}{\gamma+1}\frac{1}{M}
\end{align*}
by \eqref{eq:one step function}.
In total
\begin{align*}
F_{\gamma,\lambda}\left(\sum_{j=1}^{M} f_j,\R\right)
\ge F_{\gamma,\lambda}\left( f_1,\R\right)+\frac{2}{\gamma+1}\frac{M-1}{M}
= \frac{2}{\gamma+1}
\end{align*}
by \eqref{eq:one step function}.

Finally, we consider a general $f$; in other words,
$f$ is increasing with $f(x)=0$ for $x\le L_1$ and $f(x)=1$ for $x\ge L_2$.
We find a sequence of step functions $\{f^i\}_{i=1}^{\infty}$ of the form
\eqref{eq:fM form},
with $\Vert f^i- f\Vert_{L^{\infty}(\R)}\to 0$,
 and  $f^i(x)=0$ for $x< L_1$ and $f^i(x)=1$ for $x> L_2$.
Let $\lambda_0\in (0,\lambda)$.
Note that for all $i\in\N$ and all $x,y\in\R$,
\begin{align*}
&\left\{(x',y')\in \R\times \R\colon
	\frac{|f^i(x')-f^i(y')|}{|x'-y'|^{1+\gamma}}> \lambda_0\right\}\\
&\quad \subset [L_1-\lambda_0^{-1/(1+\gamma)},\,L_2+\lambda_0^{-1/(1+\gamma)}]
\times [L_1-\lambda_0^{-1/(1+\gamma)},\,L_2+\lambda_0^{-1/(1+\gamma)}],
\end{align*}
and the same holds with $f^i$ replaced by $f$.
Now by Lemma \ref{lem:uniform convergence}, we have that
\[
F_{\gamma,\lambda'}(f,\R)
=\lim_{i\to\infty}F_{\gamma,\lambda'}(f^i,\R)
\ge \frac{2}{\gamma+1}
\]
for all $\lambda'>0$ such that
$\nu_{\gamma}(	\left\{(x,y)\in \Om\times \Om\colon
|f(x)-f(y)|=\lambda'|x-y|^{1+\gamma}\right\})=0$;
note that this is true for all except at most countably many $\lambda'$, by the fact that
$\nu_{\gamma}$ is a locally finite measure.
Thus choosing $\lambda'\searrow \lambda$, we get
\[
F_{\gamma,\lambda}\left(f,\R\right)\ge \frac{2}{\gamma+1}.
\]
\end{proof}
	
		\begin{corollary}\label{cor:1d case preli}
		Let $-\infty<b_1\le b_2<\infty$ and $-\infty<L_1\le L_2<\infty$, and let 
		$f$ be increasing with $f(x)=b_1$ for $x\le L_1$ and $f(x)=b_2$ for $x\ge L_2$.
		Let $\gamma>0$.
		Then for every $\delta>0$, we have
		\[
		\liminf_{\lambda\to\infty}\lambda \nu_\gamma(\left\{(x,y)\in (L_1-\delta,L_2+\delta)^2\colon
		|f(x)-f(y)|>\lambda|x-y|^{1+\gamma}\right\})
		\ge \frac{2(b_2-b_1)}{\gamma+1}.
		\]
	\end{corollary}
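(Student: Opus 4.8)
The plan is to first remove the restriction to the box $(L_1-\delta,L_2+\delta)^2$, reducing the claim to $\lim_{\lambda\to\infty}F_{\gamma,\lambda}(f,\R)=\tfrac{2(b_2-b_1)}{\gamma+1}$, then invoke Theorem~\ref{thm:1d case preli} for the lower bound, and finally obtain the matching upper bound by treating step functions first and passing to the limit.

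We may assume $b_1<b_2$ and, by scaling, that $b_1=0$ and $b_2=1$, as in the proof of Theorem~\ref{thm:1d case preli}. First I would note that if $(x,y)\in E_{\gamma,\lambda}(f,\R)$, say with $x<y$, then $f(y)>f(x)$, since $f$ is increasing and $|f(x)-f(y)|>0$; hence $y>L_1$ and $x<L_2$, while $0<\lambda(y-x)^{1+\gamma}<|f(x)-f(y)|\le 1$ forces $y-x<\lambda^{-1/(1+\gamma)}$. Thus, once $\lambda>\delta^{-(1+\gamma)}$, we have $E_{\gamma,\lambda}(f,\R)\subset(L_1-\delta,L_2+\delta)^2$, so the quantity inside the limit equals $F_{\gamma,\lambda}(f,\R)$. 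The lower bound $\liminf_{\lambda\to\infty}F_{\gamma,\lambda}(f,\R)\ge\tfrac{2}{\gamma+1}$ is then exactly Theorem~\ref{thm:1d case preli}.

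For the upper bound I would start with a step function $f$ of the form \eqref{eq:fM form}, with jump points $a_1<\dots<a_M$. Once $\lambda$ is large enough that the intervals $(a_j-\lambda^{-1/(1+\gamma)},\,a_j+\lambda^{-1/(1+\gamma)})$ are pairwise disjoint, no pair $(x,y)\in E_{\gamma,\lambda}(f,\R)$ can straddle two distinct jump points, because straddling $a_j<a_k$ would force $|x-y|\ge a_k-a_j$, contradicting $|x-y|<\lambda^{-1/(1+\gamma)}$. Hence $E_{\gamma,\lambda}(f,\R)$ is the disjoint union of the sets appearing in \eqref{eq:one step function}, and summing that identity over $j$ gives $F_{\gamma,\lambda}(f,\R)=\tfrac{2}{\gamma+1}$ exactly for all large $\lambda$, so the limit is $\tfrac{2}{\gamma+1}$. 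For a general increasing $f$ I would approximate it uniformly by step functions $f^i$ of this form, just as in the last part of the proof of Theorem~\ref{thm:1d case preli}, and run the Fatou argument there in the reverse direction: for $\lambda_0<\lambda$ one has $|f^i(x)-f^i(y)|>\lambda_0|x-y|^{1+\gamma}$ for all large $i$ whenever $|f(x)-f(y)|>\lambda|x-y|^{1+\gamma}$, so Fatou's lemma gives $\nu_\gamma(E_{\gamma,\lambda}(f,\R))\le\liminf_{i\to\infty}\nu_\gamma(E_{\gamma,\lambda_0}(f^i,\R))$, and then one lets $\lambda_0\uparrow\lambda$.

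The hard part will be this passage to a general $f$: approximating $f$ well forces the jump points of $f^i$ to cluster, whereas the exact identity $F_{\gamma,\lambda}(f^i,\R)=\tfrac{2}{\gamma+1}$ requires those jumps to stay separated by more than $\lambda^{-1/(1+\gamma)}$, a threshold that shrinks as $\lambda\to\infty$. Reconciling these would require coupling the index $i=i(\lambda)$ to $\lambda$ and carefully estimating the contribution of pairs $(x,y)$ that straddle several nearby jumps — precisely the kind of ``global'' control of the Cantor part mentioned in the introduction.
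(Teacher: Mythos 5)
Your lower-bound argument is exactly the paper's: for $\lambda$ sufficiently large one has $E_{\gamma,\lambda}(f,\R)\subset (L_1-\delta,L_2+\delta)^2$, so the quantity in question equals $F_{\gamma,\lambda}(f,\R)$, and Theorem~\ref{thm:1d case preli} gives the bound $\ge \frac{2(b_2-b_1)}{\gamma+1}$ for every such $\lambda$.

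The trouble is the upper bound: it is \emph{false} for a general increasing $f$ of the prescribed form. Take $b_1=L_1=0$, $b_2=L_2=1$, and $f(x)=x$ on $[0,1]$, constant outside. For $0\le x<y\le 1$ the condition $|f(x)-f(y)|>\lambda|x-y|^{1+\gamma}$ reduces to $y-x<\lambda^{-1/\gamma}$, and a direct computation gives
\[
\lambda\,\nu_\gamma\bigl(E_{\gamma,\lambda}(f,\R)\bigr)\;\longrightarrow\; \frac{2}{\gamma}\;>\;\frac{2}{\gamma+1},
\]
consistent with the absolutely continuous coefficient in Proposition~\ref{thm:1d case variation abs cont}, since here $Df=D^af$. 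Your step-function computation is correct: once the jumps are separated by more than a multiple of $\lambda^{-1/(1+\gamma)}$, $E_{\gamma,\lambda}$ decomposes and one gets $F_{\gamma,\lambda}(f,\R)=\frac{2}{\gamma+1}$ exactly. But that identity does not survive uniform limits, and your reverse-Fatou route cannot close the gap for the fundamental reason above, not merely for the bookkeeping reasons you flag: as soon as $f$ has a nontrivial absolutely continuous part, the limit is strictly larger than $\frac{2}{\gamma+1}$.

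As for the paper: its one-line proof of the Corollary only reduces the statement to Theorem~\ref{thm:1d case preli} and therefore only establishes the lower bound. In the sole place the Corollary is invoked (the proof of Proposition~\ref{thm:1d case variation}), only the inequality ``$\ge$'' is used. The ``$=$'' in the Corollary's statement appears to be an overstatement and should read as a $\liminf$ lower bound. So the genuine gap in your proposal is that you are trying to prove a direction that is not true; the correct reaction is to drop the upper-bound effort entirely, as the rest of the paper does not need it.
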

\begin{proof}
	Note that
	\[
	\left\{(x,y)\in (L_1-\delta,L_2+\delta)^2\colon
	|f(x)-f(y)|>\lambda|x-y|^{1+\gamma}\right\}
	=E_{\gamma,\lambda}(f,\R)
	\]
	for sufficiently large $\lambda$.
	Now the conclusion follows from Theorem \ref{thm:1d case preli}.
\end{proof}
	
	Next, we can estimate the relation between the functional and the total variation.
	By \cite[Theorem 1.4]{BSSY} we know that for an open interval $\Om\subset \R$ and $f\in L^1_{\loc}(\Om)$,
	 we have
	\begin{equation}\label{eq:sup bound}
	F_{\gamma,\lambda}\left(f,\Om\right)
	\le C'\Var(f,\Om)
	\end{equation}
	for some constant $C'$ depending only on $n$ and $\gamma$.
	
	\begin{proposition}\label{thm:1d case variation}
	Let $\gamma>0$, let $\Om\subset \R$ be open and let 
	$f\in L^1_{\loc}(\Om)$ with $\Var(f,\Om)<\infty$.
	Then
	\[
	\liminf_{\lambda \to \infty}F_{\gamma,\lambda}\left(f,\Om\right)
	\ge \frac{2}{\gamma+1}|Df|(\Om).
	\]
\end{proposition}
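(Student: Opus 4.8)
\emph{Overall plan.} I would reduce to the case where $\Om$ is a bounded open interval, and then extract the full total variation from a decomposition of $\Om$ into pairwise disjoint subintervals on which $f$ is essentially monotone, applying Theorem \ref{thm:1d case preli} and Corollary \ref{cor:1d case preli} piece by piece.

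\emph{Reduction to a bounded interval.} Write $\Om=\bigsqcup_i I_i$ as a countable disjoint union of open intervals. Since $E_{\gamma,\lambda}(f,I_i)=E_{\gamma,\lambda}(f,\Om)\cap(I_i\times I_i)$ and the squares $I_i\times I_i$ are pairwise disjoint, $F_{\gamma,\lambda}(f,\Om)\ge\sum_i F_{\gamma,\lambda}(f,I_i)$, while $|Df|(\Om)=\sum_i|Df|(I_i)$. Taking $\liminf_{\lambda\to\infty}$, using superadditivity of $\liminf$ over finite sums, and then supremizing over finite subfamilies of indices, it suffices to prove the inequality on each $I_i$ separately. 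On a bounded open interval, $\Var(f,I_i)<\infty$ together with $f\in L^1_{\loc}(I_i)$ forces $f\in L^\infty(I_i)\subset L^1(I_i)$, hence $f\in\BV(I_i)$. So we may assume $\Om=(a,b)$ is bounded and $f\in\BV(\Om)$.

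\emph{Main argument.} Fix $\eps>0$ and let $\sigma$ be the density of $Df$ with respect to $|Df|$, so $|\sigma|=1$ $|Df|$-a.e.\ and $Df=\sigma\,|Df|$. Since $|Df|$-a.e.\ point is a point of approximate continuity of $\sigma$ with respect to $|Df|$, a Besicovitch/Vitali covering argument for the measure $|Df|$ yields pairwise disjoint closed intervals $J_k=[\alpha_k,\beta_k]\subset\Om$, of arbitrarily small diameter, with $\sum_k|Df|(J_k)\ge|Df|(\Om)-\eps$ and $|f(\beta_k)-f(\alpha_k)|\ge(1-\eps)\,|Df|(J_k)$; that is, on each $J_k$ the net increment of $f$ nearly exhausts its total variation. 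The crucial \emph{per-piece estimate} to establish is then
\[
\liminf_{\lambda\to\infty}F_{\gamma,\lambda}(f,J_k)\ \ge\ \frac{2}{\gamma+1}\,(1-\eps')\,|Df|(J_k),\qquad \eps'\to0 \text{ as }\eps\to0 .
\]
Granting this, since $E_{\gamma,\lambda}(f,J_k)=E_{\gamma,\lambda}(f,\Om)\cap(J_k\times J_k)$ and the $J_k\times J_k$ are disjoint, $F_{\gamma,\lambda}(f,\Om)\ge\sum_k F_{\gamma,\lambda}(f,J_k)$; taking $\liminf_{\lambda\to\infty}$, summing over finitely many $k$ and letting their number go to infinity, and finally letting $\eps\to0$, we obtain $\liminf_{\lambda\to\infty}F_{\gamma,\lambda}(f,\Om)\ge\frac{2}{\gamma+1}|Df|(\Om)$.

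\emph{The main obstacle.} Everything hinges on the per-piece estimate, i.e.\ on showing that the small residual oscillation of $f$ on $J_k$ cannot destroy the bound coming from the monotone model. When $f|_{J_k}$ is genuinely piecewise monotone this is easy: decompose $J_k$ into its (at most countably many) maximal monotone runs $R$; on each run $f$ coincides, up to an additive constant, with an increasing function that is constant outside $R$, so Corollary \ref{cor:1d case preli} gives $\liminf_{\lambda\to\infty}\lambda\,\nu_\gamma\big(E_{\gamma,\lambda}(f,\Om)\cap(R\times R)\big)\ge\frac{2}{\gamma+1}|Df|(R)$; summing the finitely many dominant runs and using that the squares $R\times R$ are disjoint subsets of $E_{\gamma,\lambda}(f,J_k)$ and that the runs' value-lengths add up to $|Df|(J_k)$ finishes this case. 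The genuine difficulty is that $Df$ may possess \emph{no} monotone sub-interval whatsoever (the sets $\{\sigma=1\}$ and $\{\sigma=-1\}$ can both carry positive $|Df|$-mass in every sub-interval), which is exactly the delicate Cantor regime; there a plain $L^\infty$ approximation of $f|_{J_k}$ by a monotone function is worthless, since at the scales $|x-y|\sim\lambda^{-1/(1+\gamma)}$ that carry the liminf the increments $|f(x)-f(y)|$ are themselves infinitesimal, so a fixed $L^\infty$ error swamps them. One is thus forced into a genuinely global, scale-aware argument, coupling the cutoff $\lambda^{-1/(1+\gamma)}$ to the decomposition --- e.g.\ by iterating the Besicovitch decomposition, the leftover oscillation at each stage living at a strictly smaller variation scale so that the contributions form a series summing to $\frac{2}{\gamma+1}|Df|(\Om)$, or by adapting the inductive ``add one jump at a time'' mechanism from the proof of Theorem \ref{thm:1d case preli} directly to this setting. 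Producing this estimate is the crux of the whole proof.
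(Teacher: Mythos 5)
The first half of your argument --- reducing to a bounded interval, then extracting via a Vitali covering a family of intervals $J_k$ on which $f$ is nearly monotone in the sense that the net increment almost exhausts the variation --- parallels the paper's construction. But, as you yourself acknowledge, the argument hinges on a per-piece estimate
\[
\liminf_{\lambda\to\infty}F_{\gamma,\lambda}(f,J_k)\ \ge\ \frac{2}{\gamma+1}(1-\eps')\,|Df|(J_k),
\]
and your final paragraph is a candid admission that you have not proven it. That is a genuine gap, not a gloss.

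Your diagnosis that a fixed $L^\infty$ approximation of $f|_{J_k}$ by a monotone function is useless is correct: the increments that matter at scale $|x-y|\sim\lambda^{-1/(1+\gamma)}$ are themselves $O(\lambda^{-\gamma/(1+\gamma)})$ and are swamped by any $\lambda$-independent sup-norm error. What you miss is that the required $\lambda$-uniform control is already supplied two sentences before the Proposition, in \eqref{eq:sup bound}: $F_{\gamma,\lambda}(h,\Om)\le C'\Var(h,\Om)$ for all $\lambda$. This converts a small error in the \emph{$\BV$-norm}, rather than the sup-norm, into a small, $\lambda$-uniform error in the nonlocal functional --- exactly the scale-aware mechanism you suspected was necessary. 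The paper therefore does not attempt your per-piece estimate for $f$ directly. Instead, it builds an exactly monotone competitor $g$: on each Vitali interval $[c_j,d_j]$ set $\mu:=(Df)_+$ or $\mu:=(Df)_-$ according to the sign of $Df([c_j,d_j])$, and $\mu:=0$ elsewhere, and take $g$ to be the primitive of $\mu$. Then $g$ is monotone on each $[c_j,d_j]$ and constant on the complement, so Corollary \ref{cor:1d case preli} applies cleanly on each $(c_j-\delta,d_j+\delta)$; and the Vitali near-monotonicity threshold, chosen as $1-\eps^2$ rather than $1-\eps$, gives $|D(f-g)|(\Om)=O(\eps^2)$. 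The comparison between $f$ and $g$ is then done via the triangle-inequality inclusion
\[
E_{\gamma,(1+\eps)\lambda}(g,\Om)\ \subset\ E_{\gamma,\lambda}(f,\Om)\ \cup\ E_{\gamma,\eps\lambda}(g-f,\Om),
\]
which yields
\[
F_{\gamma,\lambda}(f,\Om)\ \ge\ \frac{1}{1+\eps}F_{\gamma,(1+\eps)\lambda}(g,\Om)\ -\ \eps^{-1}F_{\gamma,\eps\lambda}(g-f,\Om),
\]
and the subtracted term is, by \eqref{eq:sup bound}, at most $\eps^{-1}C'|D(f-g)|(\Om)=O(\eps)$ uniformly in $\lambda$, hence vanishes as $\eps\to0$. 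Without invoking \eqref{eq:sup bound} there is no way to close your argument; the elementary decomposition and monotone-run observations you give do not touch the genuinely Cantor-like case.
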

\begin{proof}
	First assume that $\Om$ is connected, that is, $\Om=(a,b)$ for some $-\infty\le a<b\le \infty$.
	We can assume that $f$ is the pointwise representative
	\begin{equation}\label{eq:pw repr}
	f(x)=\limsup_{r\to 0}\vint{B(x,r)}f\,d\mathcal L^1
	\quad\textrm{for all }x\in \Om.
	\end{equation}
	By BV theory on the real line, see \cite[Section 3.2]{AFP},
	we know that $\lim_{x\to a^+}f(x)$ exists, and that
	\[
	f(y)=\lim_{x\to a^+}f(x)+|Df|((a,y])\quad \textrm{for all Lebesgue points }y\in \Om.
	\]
	Fix $0<\eps<1$. For $|Df|$-a.e. $x\in \Om$, for arbitrarily small $r>0$ we have that
	\[
	\frac{|Df([x-r,x+r])|}{|Df|([x-r,x+r])}>1-\eps^2.
	\]
Thus by Vitali's covering theorem,
we find a sequence of pairwise disjoint closed intervals $[c_j,d_j]\subset \Om$, $j\in\N$, such that
\begin{equation}\label{eq:cj and dj}
\frac{|Df([c_j,d_j])|}{|Df|([c_j,d_j])}>1-\eps^2,
\end{equation}
and $|Df|\left(\Om\setminus \bigcup_{j=1}^{\infty}[c_j,d_j]\right)=0$.
For sufficiently large $N\in\N$, we have
\begin{equation}\label{eq:j one to N}
|Df|\left(\Om\setminus \bigcup_{j=1}^{N}[c_j,d_j]\right)<\eps^2.
\end{equation}
We find $\delta>0$ such that the intervals $[c_j-\delta,d_j+\delta]$, $j=1,\ldots,N$,
are still pairwise disjoint
and contained in $\Om$.
Define a measure $\mu$ on $\Om$ as follows:
\[
\begin{cases}
\mu:=(Df)_+\quad\textrm{on }[c_j,d_j]\quad\textrm{if }Df([c_j,d_j])>0\\
\mu:=(Df)_-\quad\textrm{on }[c_j,d_j]\quad\textrm{if }Df([c_j,d_j])<0,
\end{cases}
\]
and $\mu:=0$ outside these intervals.
From \eqref{eq:cj and dj}, we have
\begin{equation}\label{eq:epsilon condition}
|Df-\mu|([c_j,d_j])<\eps^2 |Df|([c_j,d_j]).
\end{equation}
Define
\[
g(y)=\lim_{x\to a^+}f(x)+\mu((a,y])\quad \textrm{for }y\in \Om.
\]
Then
\begin{equation}\label{eq:D fg difference}
	\begin{split}
|D(f-g)|(\Om)
&=|Df-\mu|(\Om)\\
&= \sum_{j=1}^N \big[|Df-\mu|([c_j,d_j])+|Df|\left(\Om\setminus \bigcup_{j=1}^N [c_j,d_j]\right)\\
&\le \eps^2|Df|(\Om)+\eps^2\quad\textrm{by }\eqref{eq:j one to N},\,\eqref{eq:epsilon condition}.
	\end{split}
\end{equation}
Note that
\begin{equation}\label{eq:pV estimate}
	\begin{split}
	\sum_{j=1}^N|g(c_j-\delta/2)-g(d_j+\delta/2)|
	&=\sum_{j=1}^N|Dg([c_j,d_j])|\\
	&= \sum_{j=1}^N|Dg|([c_j,d_j])\\
	&\ge \sum_{j=1}^N|Df|([c_j,d_j])-\eps^2(1+|Df|(\Om))\quad\textrm{by }\eqref{eq:D fg difference}\\
	&\ge (|Df|(\Om)-\eps^2)-\eps^2(1+|Df|(\Om))\quad\textrm{by }\eqref{eq:j one to N}.
	\end{split}
\end{equation}
Note also that
\begin{align*}
&\left\{(x,y)\in \Om\times \Om\colon
\frac{|f(x)-f(y)|}{|x-y|^{1+\gamma}}>(1+\eps)\lambda\right\}\\
&\quad\subset  \left\{(x,y)\in \Om\times \Om\colon
\frac{|g(x)-g(y)|}{|x-y|^{1+\gamma}}>\lambda\right\}\\
&\qquad \cup  \left\{(x,y)\in \Om\times \Om\colon
\frac{|(f-g)(x)-(f-g)(y)|}{|x-y|^{1+\gamma}}
>\eps\lambda\right\}.
\end{align*}
Thus we can estimate
\begin{align*}
&\liminf_{\lambda \to \infty}F_{\gamma,\lambda}\left(f,\Om\right)\\
&\qquad \ge \frac{1}{1+\eps}\liminf_{\lambda \to \infty}F_{\gamma,(1+\eps)\lambda}\left(g,\Om\right)
-\eps^{-1}\limsup_{\lambda \to \infty}F_{\gamma,\eps\lambda}\left(g-f,\Om\right)\\
&\qquad\ge \frac{1}{1+\eps}\liminf_{\lambda \to \infty}F_{\gamma,(1+\eps)\lambda}\left(g,\Om\right)
-C'\eps^{-1}|D(f-g)|(\Om)\quad\textrm{by }\eqref{eq:sup bound}\\
&\qquad\ge \frac{1}{1+\eps}\sum_{j=1}^N\liminf_{\lambda \to \infty}F_{\gamma,(1+\eps)\lambda}\left(g,(c_j-\delta,d_j+\delta)\right)
-C'\eps^{-1}\eps^2 (1+|Df|(\Om))\quad\textrm{by }\eqref{eq:D fg difference}\\
&\qquad\ge \frac{1}{1+\eps}\sum_{j=1}^N\frac{2}{\gamma+1}|g(c_j-\delta/2)-g(d_j+\delta/2)|-C'\eps |Df|(\Om)
\quad\textrm{by Corollary }\ref{cor:1d case preli}\\
&\qquad\ge \frac{1}{1+\eps}\frac{2}{\gamma+1}(|Df|(\Om)-\eps^2)-\eps^2(1+|Df|(\Om))
-C'\eps |Df|(\Om)
\end{align*}
by \eqref{eq:pV estimate}.
Letting $\eps\to 0$, we get the result.

In the general case we have $\Om=\bigcup_{k=1}^{\infty}\Om_k$, where the $\Om_k$'s
are disjoint open intervals, perhaps empty.
For $M\in\N$, we have 
\begin{align*}
\liminf_{\lambda \to \infty}F_{\gamma,\lambda}\left(f,\Om\right)
&\ge \sum_{k=1}^M\liminf_{\lambda \to \infty}F_{\gamma,\lambda}\left(f,\Om_k\right)\\
&\ge \frac{2}{\gamma+1}\sum_{k=1}^M|Df|(\Om_k).
\end{align*}
Letting $M\to\infty$, we get
\[
\liminf_{\lambda \to \infty}F_{\gamma,\lambda}\left(f,\Om\right)
\ge \frac{2}{\gamma+1}|Df|(\Om).
\]
\end{proof}

	\begin{proposition}\label{thm:1d case variation abs cont}
	Let $\gamma>0$, let  $\Om\subset \R$ be open and let 
	$f\in L^1_{\loc}(\Om)$ with $\Var(f,\Om)<\infty$.
	Then
	\[
	\liminf_{\lambda \to \infty}F_{\gamma,\lambda}\left(f,\Om\right)
	\ge \frac{2}{\gamma}|D^a f|(\Om).
	\]
\end{proposition}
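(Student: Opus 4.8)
The plan is to follow the scheme of the proof of Proposition~\ref{thm:1d case variation}, with two modifications: the comparison function is taken to be \emph{affine}, with slope equal to the approximate derivative of $f$, rather than monotone; and the building block is no longer Corollary~\ref{cor:1d case preli} (whose coefficient is $2/(\gamma+1)$) but the elementary computation that an affine function of slope $a$ produces, as $\lambda\to\infty$, the amount $\tfrac{2|a|}{\gamma}$ per unit length of interval. Since $F_{\gamma,\lambda}(f,\Om')\le F_{\gamma,\lambda}(f,\Om)$ whenever $\Om'\subset\Om$ (because $E_{\gamma,\lambda}(f,\Om')\subset E_{\gamma,\lambda}(f,\Om)$ and $\nu_\gamma$ is a measure) and $|D^af|(\Om\cap(-R,R))\uparrow|D^af|(\Om)$ as $R\to\infty$, I may assume from the start that $\Om$ is bounded.

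Write $Df=f'\,\mathcal L^1+D^sf$. Fix $\eps\in(0,1)$. At $\mathcal L^1$-a.e.\ point $x_0\in\Om$, $x_0$ is a Lebesgue point of $f'$ and $D^sf$ has vanishing $\mathcal L^1$-density at $x_0$; hence for arbitrarily small $r>0$ both $\frac1{2r}\int_{x_0-r}^{x_0+r}|f'-f'(x_0)|\,d\mathcal L^1<\eps^2$ and $|D^sf|((x_0-r,x_0+r))<2\eps^2 r$. By Vitali's covering theorem I select pairwise disjoint closed intervals $[c_j,d_j]\subset\Om$ covering $\mathcal L^1$-a.e.\ of $\Om$, with centers $x_{0,j}$, such that, putting $a_j:=f'(x_{0,j})$,
\[
\int_{(c_j,d_j)}|f'-a_j|\,d\mathcal L^1<\eps^2(d_j-c_j),\qquad |D^sf|((c_j,d_j))<\eps^2(d_j-c_j);
\]
and then, for $N$ large, $|D^af|\bigl(\Om\setminus\bigcup_{j=1}^N[c_j,d_j]\bigr)<\eps^2$. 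Put $g_j(x):=f(x_{0,j})+a_j(x-x_{0,j})$ and $h_j:=f-g_j$; since $Dh_j=(f'-a_j)\,\mathcal L^1+D^sf$ on $(c_j,d_j)$ (a mutually singular decomposition), $\Var(h_j,(c_j,d_j))<2\eps^2(d_j-c_j)$.

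Because the squares $(c_j,d_j)^2$ are pairwise disjoint subsets of $\Om\times\Om$, we have $F_{\gamma,\lambda}(f,\Om)\ge\sum_{j=1}^N F_{\gamma,\lambda}(f,(c_j,d_j))$. For each $j$, the inclusion $E_{\gamma,(1+\eps)\lambda}(g_j,(c_j,d_j))\subset E_{\gamma,\lambda}(f,(c_j,d_j))\cup E_{\gamma,\eps\lambda}(h_j,(c_j,d_j))$ together with \eqref{eq:sup bound} gives, just as in Proposition~\ref{thm:1d case variation},
\[
F_{\gamma,\lambda}(f,(c_j,d_j))\ge\frac1{1+\eps}F_{\gamma,(1+\eps)\lambda}(g_j,(c_j,d_j))-2C'\eps(d_j-c_j).
\]
For the affine term, $E_{\gamma,\mu}(g_j,(c_j,d_j))=\{(x,y)\in(c_j,d_j)^2:0<|x-y|<(|a_j|/\mu)^{1/\gamma}\}$, and restricting the integral of $|x-y|^{\gamma-1}$ to $x\in(c_j+\rho_\mu,d_j-\rho_\mu)$ with $\rho_\mu:=(|a_j|/\mu)^{1/\gamma}$ yields $\mu\,\nu_\gamma(E_{\gamma,\mu}(g_j,(c_j,d_j)))\ge\frac{2|a_j|}{\gamma}(d_j-c_j-2\rho_\mu)$, whence $\liminf_{\lambda\to\infty}F_{\gamma,(1+\eps)\lambda}(g_j,(c_j,d_j))\ge\frac{2|a_j|}{\gamma}(d_j-c_j)$ (trivially when $a_j=0$). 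Summing over $j$, using $\sum_{j=1}^N|a_j|(d_j-c_j)\ge\sum_{j=1}^N|D^af|((c_j,d_j))-\eps^2\mathcal L^1(\Om)\ge|D^af|(\Om)-\eps^2-\eps^2\mathcal L^1(\Om)$, and taking $\liminf_{\lambda\to\infty}$, I obtain
\[
\liminf_{\lambda\to\infty}F_{\gamma,\lambda}(f,\Om)\ge\frac1{1+\eps}\cdot\frac2\gamma\bigl(|D^af|(\Om)-\eps^2-\eps^2\mathcal L^1(\Om)\bigr)-2C'\eps\,\mathcal L^1(\Om),
\]
and letting $\eps\to0$ proves the bounded case, hence the proposition.

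The part that genuinely uses the hypotheses, and the only delicate point I foresee, is the error estimate: after dividing by $\eps$ one needs $\sum_{j=1}^N\Var(h_j,(c_j,d_j))=O(\eps^2)$, which forces the Vitali intervals to be chosen so that $f'$ is nearly constant on each of them \emph{and} $D^sf$ carries negligible mass there. The latter is possible precisely because $D^sf$ is singular with respect to $\mathcal L^1$, so it is invisible at $\mathcal L^1$-a.e.\ point; this is also the reason the absolutely continuous part is isolated here and treated with the sharper coefficient $2/\gamma$ rather than the $2/(\gamma+1)$ of Proposition~\ref{thm:1d case variation}.
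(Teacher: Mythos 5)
Your proof is correct, but it takes a genuinely different route from the paper's (which, as noted there, is essentially Picenni's argument). The paper works locally and pointwise: it passes to the good representative, uses a.e.\ differentiability, and for each $\lambda,\eps$ introduces the set $A_{\lambda,\eps}$ of points $x$ where the one-sided difference quotients of $f$ stay within a factor $(1-\eps)$ of $|f'(x)|$ out to distance $\left((1-\eps)|f'(x)|/\lambda\right)^{1/\gamma}$; it then bounds $\nu_\gamma(E_{\gamma,\lambda}(f,\Om))$ below by an explicit integral over $A_{\lambda,\eps}$, lets $\lambda\to\infty$ so that $A_{\lambda,\eps}$ exhausts $\Om$ up to a null set, and finally lets $\eps\to 0$. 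You instead transplant the covering-and-comparison scheme of Proposition~\ref{thm:1d case variation}: Vitali cover by intervals on which $f'$ is nearly constant and $|D^sf|$ is negligible (both available a.e.\ since $D^sf\perp\mathcal L^1$), affine comparison functions $g_j$, the set inclusion $E_{\gamma,(1+\eps)\lambda}(g_j)\subset E_{\gamma,\lambda}(f)\cup E_{\gamma,\eps\lambda}(f-g_j)$, and control of the error via~\eqref{eq:sup bound} and $\Var(f-g_j,(c_j,d_j))<2\eps^2(d_j-c_j)$. The building-block computation $\liminf_{\mu\to\infty}F_{\gamma,\mu}(g_j,(c_j,d_j))\ge\frac{2|a_j|}{\gamma}(d_j-c_j)$ is correct, as are the disjointness of the squares $(c_j,d_j)^2$, the superadditivity of $\liminf$ over the finite sum, and the reduction to bounded $\Om$. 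What your approach buys is conceptual uniformity: the absolutely continuous and Cantor parts are now handled by the \emph{same} covering-plus-comparison scheme, with the coefficients $2/\gamma$ and $2/(\gamma+1)$ attributable purely to the different building blocks (affine versus monotone bump). The paper's argument is shorter here because it avoids the covering and the error function entirely, but it is tied to pointwise differentiability and so does not extend to $D^cf$, which the paper therefore treats separately via Theorem~\ref{thm:1d case preli}.
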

The proof is essentially the same as one given in \cite{Pic}, but we provide it for completeness.

\begin{proof}
	Again we can assume $f$ to be the pointwise representative \eqref{eq:pw repr}.
	Then we have that $f$ is differentiable at a.e. $x\in \Om$, with
	\[
	\int_{\Om}|f'(x)|\,dx=|D^a f|(\Om);
	\]
	see \cite[Section 3.2]{AFP}.
	Fix $0<\eps<1$.
	For every $0<\lambda<\infty$, let
\begin{align*}
 A_{\lambda,\eps}:=\Bigg\{x\in \Om\colon \frac{|f(x)-f(y)|}{y-x}\ge (1-\eps)|f'(x)|
 \textrm{ for all }y\in \left(x,x+\left(\frac{(1-\eps)|f'(x)|}{\lambda}\right)^{1/\gamma}\right)\Bigg\}.
\end{align*}
If $x\in A_{\lambda,\eps}$, then for every 
\[
y\in \left(x,x+\left(\frac{(1-\eps)|f'(x)|}{\lambda}\right)^{1/\gamma}\right)
\]
we have
\[
|f(y)-f(x)|>(1-\eps)(y-x)|f'(x)|>\lambda (y-x)^{1+\gamma}.
\]
Thus
\begin{align*}
\nu_{\gamma}(E_{\gamma,\lambda}(f,\Om))
&\ge 2\int_{A_{\lambda,\eps}}\int_{x}
	^{x+\left(\frac{(1-\eps)|f'(x)|}{\lambda}\right)^{1/\gamma}}(y-x)^{\gamma-1}\,dy\,dx\\
	&=2\frac{1-\eps}{\gamma\lambda}\int_{A_{\lambda,\eps}}|f'(x)|\,dx.
\end{align*}
Note that a.e. $x\in \Om$ belongs to $A_{\lambda,\eps}$ for sufficiently large $\lambda>0$.
Thus we get
\begin{align*}
		\liminf_{\lambda \to \infty}F_{\gamma,\lambda}\left(f,\Om\right)
		& = 	\liminf_{\lambda \to \infty}\lambda \nu_{\gamma}(E_{\gamma,\lambda}(f,\Om))\\
		&\ge 2\frac{1-\eps}{\gamma}\liminf_{\lambda \to \infty}\int_{A_{\lambda,\eps}}|f'(x)|\,dx\\
		&= 2\frac{1-\eps}{\gamma}\int_{\Om}|f'(x)|\,dx\\
		&= 2\frac{1-\eps}{\gamma}|D^a f|(\Om).
\end{align*}
Letting $\eps\to 0$, we get the result.
\end{proof}
	
	The following is our main result in the 1-dimensional case.
	
		\begin{proposition}\label{thm:1d case variation abs cont and sing}
		Let $\gamma>0$, let  $\Om\subset \R$ be open and let 
		$f\in L^1_{\loc}(\Om)$ with $\Var(f,\Om)<\infty$.
		Then
		\[
		\liminf_{\lambda \to \infty}F_{\gamma,\lambda}\left(f,\Om\right)
		\ge \frac{2}{\gamma}|D^a f|(\Om)+\frac{2}{\gamma+1}|D^s f|(\Om).
		\]
	\end{proposition}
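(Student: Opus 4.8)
The plan is to derive the bound by combining Proposition \ref{thm:1d case variation} and Proposition \ref{thm:1d case variation abs cont} through a localization: apply the first one (with the weaker constant $2/(\gamma+1)$) on a small open set that carries almost all of $|D^s f|$, and the second one (with the stronger constant $2/\gamma$) on the complementary open set, which then carries almost all of $|D^a f|$. The mechanism that makes this additive is the superadditivity of the functional in the domain: if $A,B\subset\Om$ are disjoint open sets, then $E_{\gamma,\lambda}(f,A)$ and $E_{\gamma,\lambda}(f,B)$ are disjoint subsets of $E_{\gamma,\lambda}(f,\Om)$, since they are contained in $A\times A$ and $B\times B$ respectively; as $\nu_\gamma$ is a measure, this gives
\[
F_{\gamma,\lambda}(f,\Om)\ge F_{\gamma,\lambda}(f,A)+F_{\gamma,\lambda}(f,B)\qquad\text{for every }\lambda>0,
\]
and therefore $\liminf_{\lambda\to\infty}F_{\gamma,\lambda}(f,\Om)\ge\liminf_{\lambda\to\infty}F_{\gamma,\lambda}(f,A)+\liminf_{\lambda\to\infty}F_{\gamma,\lambda}(f,B)$.

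To construct the two sets, fix $\eps>0$. Since $D^s f$ is singular with respect to $\mathcal L^1$, there is a Borel set $N\subset\Om$ with $\mathcal L^1(N)=0$ and $|D^s f|(\Om\setminus N)=0$. By outer regularity of Lebesgue measure I choose an open set $V$ with $N\subset V\subset\Om$ and $\mathcal L^1(V)<\eps$; then $|D^s f|(V)=|D^s f|(\Om)$. Writing $V$ as a countable disjoint union of open intervals (each necessarily bounded, since $\mathcal L^1(V)<\infty$) and keeping finitely many of them, I obtain a finite union $A$ of bounded open intervals with $\mathcal L^1(A)<\eps$ and $|D^s f|(A)>|D^s f|(\Om)-\eps$. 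Set $B:=\Om\setminus\overline A$; this is open and disjoint from $A$, and $\overline A$ is a finite union of bounded closed intervals with $\mathcal L^1(\overline A)=\mathcal L^1(A)<\eps$. Because $|D^a f|\ll\mathcal L^1$, the mass $|D^a f|(\overline A)$ is controlled by some $\omega(\eps)$ with $\omega(\eps)\to0$ as $\eps\to0$, so $|D^a f|(B)\ge|D^a f|(\Om)-\omega(\eps)$.

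Now $\Var(f,A)\le\Var(f,\Om)<\infty$ and likewise for $B$, so Proposition \ref{thm:1d case variation} applied on $A$ gives $\liminf_{\lambda\to\infty}F_{\gamma,\lambda}(f,A)\ge\frac{2}{\gamma+1}|Df|(A)\ge\frac{2}{\gamma+1}|D^s f|(A)$, using $|Df|=|D^a f|+|D^s f|$; and Proposition \ref{thm:1d case variation abs cont} applied on $B$ gives $\liminf_{\lambda\to\infty}F_{\gamma,\lambda}(f,B)\ge\frac{2}{\gamma}|D^a f|(B)$. Feeding these into the superadditivity inequality,
\[
\liminf_{\lambda\to\infty}F_{\gamma,\lambda}(f,\Om)\ge\frac{2}{\gamma+1}\bigl(|D^s f|(\Om)-\eps\bigr)+\frac{2}{\gamma}\bigl(|D^a f|(\Om)-\omega(\eps)\bigr),
\]
and letting $\eps\to0$ yields the assertion.

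The argument is short, and the one place that needs attention is the choice of $A$: it must at once be small in Lebesgue measure — so that, by absolute continuity, $B$ retains essentially all of $|D^a f|$ — and absorb essentially all of $|D^s f|$, which is possible precisely because $|D^s f|$ is concentrated on a Lebesgue-null set. Passing to $B=\Om\setminus\overline A$ discards the finitely many endpoints of $A$, but this costs nothing: $|D^a f|$ assigns them no mass, and the singular mass possibly sitting there is not needed, since $A$ alone already captures all of $|D^s f|$ up to $\eps$.
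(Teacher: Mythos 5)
Your proof is correct and follows essentially the same route as the paper's: fix $\eps>0$, produce a small open set carrying almost all of $|D^sf|$ and little of $|D^af|$, apply Proposition \ref{thm:1d case variation} there and Proposition \ref{thm:1d case variation abs cont} on the complementary open set, and combine via the superadditivity of $F_{\gamma,\lambda}(f,\cdot)$ over disjoint open subsets. The paper phrases the localization through a pair $W\Subset W'$ while you build a finite union of intervals directly from the null set supporting $D^sf$ via outer regularity, but this is only a cosmetic difference in how the same decomposition is produced.
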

\begin{proof}
	Fix $\eps>0$.
We find an open set $W'\subset \Om$ such that $|D^a f|(W')<\eps$ and $|D^s f|(\Om\setminus W')<\eps$.
We also find an open set $W\Subset W'$ such that $|Df|(W\setminus W')<\eps$.
Now
\begin{align*}
	\liminf_{\lambda \to \infty}F_{\gamma,\lambda}\left(f,\Om\right)
	&\ge \liminf_{\lambda \to \infty}F_{\gamma,\lambda}\left(f,W\right)
	+\liminf_{\lambda \to \infty}F_{\gamma,\lambda}\left(f,\Om\setminus \overline{W}\right)\\
	&\ge \frac{2}{\gamma+1}|D f|(W)+\frac{2}{\gamma}|D^a f|(\Om\setminus \overline{W})
	\quad\textrm{by Propositions }\ref{thm:1d case variation},\ref{thm:1d case variation abs cont}\\
	&\ge \frac{2}{\gamma+1}\left(|D^s f|(\Om)-2\eps\right) 
	+ \frac{2}{\gamma}\left(|D^a f|(\Om)-\eps\right).
\end{align*}
Letting $\eps\to 0$, we get the result.
\end{proof}
	
	Now the generalization from the 1-dimensional to the $n$-dimensional case is standard.
	Denote the unit sphere in $\R^n$ by $\mathbb S^{n-1}$.
	For a unit vector $\sigma\in \mathbb S^{n-1}$, denote by $\langle \sigma\rangle^\perp$ the $(n-1)$-dimensional hyperplane perpendicular to $\sigma$ and containing the origin.
	For an open set $\Om\subset \R^n$ and $z\in\langle \sigma\rangle^\perp$, also denote
	$\Om_{\sigma,z}:=\{t\in \R\colon z+\sigma t\in \Om\}$ and
	\[
	f_{\sigma,z}(t):=f(z+\sigma t),\quad  t\in \Om_{\sigma,z}.
	\]
	
	\begin{proof}[Proof of Theorem \ref{thm:main}]
	We note the following facts that can be found in \cite[Section 3]{AFP}:
	for $\sigma\in \mathbb S^{n-1}$,
		\begin{align*}
			\int_{\Om}\,d|D^a f\cdot \sigma|
			&=\int_{\langle \sigma\rangle^\perp}|D^a f_{\sigma,z}|(\Om_{\sigma,z})\,dz,
		\end{align*}
	and the analogous fact with the absolutely continuous parts replaced by the singular parts,
		and then
		\begin{equation}\label{eq:change of var for Df}
			C_n\int_{\Om}\,d|D^a f|
			=\int_{\mathbb S^{n-1}}\int_{\langle \sigma\rangle^\perp}|D^a f_{\sigma,z}|(\Om_{\sigma,z})\,dz\,d\sigma,
		\end{equation}
		and again the analogous fact with the absolutely continuous parts replaced by the singular parts.
	Antonucci et al. \cite[p. 622]{AGMP} note that 
	for $g\in L^1(\Om\times\Om)$, we have the change of variables formula
	\[
	\int_{\R^n}\int_{\R^n} g(x',y')\,dx'\,dy'
	=\frac 12 \int_{\mathbb S^{n-1}} \int_{\langle \sigma\rangle^\perp}
	\int_{\R}\int_{\R}g(z+\sigma x,z+\sigma y)|x-y|^{n-1}\,dy\,dx\,dz\,d\sigma.
	\]
	Thus we have
	\begin{equation}\label{eq:Om multiple integral}
	\begin{split}
	\int_{\Om}\int_{\Om} g(x',y')\,dx'\,dy'
	 =\frac 12 \int_{\mathbb S^{n-1}} \int_{\langle \sigma\rangle^\perp}
	\int_{\Om_{\sigma,z}}\int_{\Om_{\sigma,z}}g(z+\sigma x,z+\sigma y)|x-y|^{n-1}\,dy\,dx\,dz\,d\sigma.
	\end{split}
	\end{equation}
	We get
	\begin{align*}
	&\liminf_{\lambda\to\infty}\lambda \nu_\gamma\left(\left\{(x',y')\in \Om\times \Om\colon
	\frac{|f(x')-f(y')|}{|x'-y'|^{1+\gamma}}>\lambda\right\}\right)\\
	& =\liminf_{\lambda\to\infty}\lambda\int_{\Om}\int_{\Om} \mathbbm{1}_{\{(x',y')\in \Om\times \Om\colon
		|f(x')-f(y')|/|x'-y'|^{1+\gamma}>\lambda\}}(x',y')|x'-y'|^{\gamma-n}\,dx'\,dy'\\
	&= \liminf_{\lambda\to\infty}\frac {\lambda}{2}\int_{\mathbb S^{n-1}}\int_{\langle \sigma\rangle^\perp}
		\quad \textrm{by }\eqref{eq:Om multiple integral}\\
	&\quad\int_{\Om_{\sigma,z}}\int_{\Om_{\sigma,z}}\mathbbm{1}_{\{(x,y)\in \Om_{\sigma,z}
		\times \Om_{\sigma,z}\colon
		|f(z+\sigma x)-f(z+\sigma y)|/|x-y|^{1+\gamma}>\lambda\}}(x,y)
	 \cdot |x-y|^{\gamma-1}\,dy\,dx\,dz\,d\sigma\\
	 &\ge \frac {1}{2}\int_{\mathbb S^{n-1}}\int_{\langle \sigma\rangle^\perp}\liminf_{\lambda\to\infty}\lambda\int_{\Om_{\sigma,z}}\int_{\Om_{\sigma,z}}
	 \quad \textrm{by Fatou's lemma}\\
	 &\quad\ \mathbbm{1}_{\{(x,y)\in \Om_{\sigma,z}
	 	\times \Om_{\sigma,z}\colon
	 	|f(z+\sigma x)-f(z+\sigma y)|/|x-y|^{1+\gamma}>\lambda\}}(x,y)
	 \cdot |x-y|^{\gamma-1}\,dy\,dx\,dz\,d\sigma\\
	& \ge \frac{1}{\gamma}\int_{\mathbb S^{n-1}}\int_{\langle \sigma\rangle^\perp}
	|D^a f_z|(\Om_{\sigma,z})\,dz\,d\sigma
	+\frac{1}{\gamma+1}\int_{\mathbb S^{n-1}}\int_{\langle \sigma\rangle^\perp}
	|D^s f_z|(\Om_{\sigma,z})\,dz\,d\sigma
	\quad\textrm{by Prop. }\ref{thm:1d case variation abs cont and sing}\\
	& = \frac{C_n}{\gamma}|D^a f|(\Om)+\frac{C_n}{\gamma+1}|D^s f|(\Om)
	\end{align*}
	by \eqref{eq:change of var for Df}.
	\end{proof}

In the following example, we show that the coefficient of $|D^cf|(\Om)$
in Theorem \ref{thm:main} is sharp.
The example is based on a standard Cantor-type construction considered previously for example
in \cite[Example 6.3]{DPFP}.

\begin{example}\label{ex:sharpness}
Construct a generalized Cantor-type function as follows.
Let $0<\alpha_j\le 1/2$ for each $j\in\N$;
in fact we will choose either $0<\alpha_j\le 1/4$ or $\alpha_j=1/2$ for each $j\in\N$.
Let $I^0_1:=[0,1]$.
Let $I_1^1:=[0,\alpha_1]$, $J_1^1:=(\alpha_1,1-\alpha_1)$, and $I_2^1:=[1-\alpha_1,1]$.
Recursively, divide each $I^j_k$, with $j\in\N$ and $k=1,\ldots,2^j$, into three subintervals,
which are denoted $I_{2k-1}^{j+1}$, $J_{k}^{j+1}$, and $I_{2k}^{j+1}$, such that the open interval
$J_{k}^{j+1}$ is centered at the center point of the closed interval $I^j_k$, with length $(1-2\alpha_{j+1})|I^j_k|$.

Define $f_j$ to be a continuous, increasing function in $[0,1]$, with $f(0)=0$ and $f(1)=1$, and such that
\[
f_j(x):=\frac{2k-1}{2^l}\quad\textrm{for }x\in J_k^l,\ \textrm{with }l\le j\textrm{ and }1\le k\le 2^{l-1},
\]
and $f_j$ is affine on each $I^j_k$ , with $k=1,\ldots,2^{j}$. 
If an interval $J^l_k$ is empty (as happens when $\alpha_l=1/2$),
then we also let $f_j$ be affine on the union of the two adjacent ``$I$-intervals''.
Then let
$f$ be the pointwise (and uniform) limit of the $f_j$'s. Consider $f$ in the domain $\Om:=(0,1)$.
It is clear that
$|Df|(\Om)=1$.

Let $\gamma>0$.
Since $\nu_{\gamma}$ is a finite measure on $\Om\times \Om$, absolutely continuous
with respect to the measure $\mathcal L^1\times\mathcal L^1$, it is easy to see that
for any functions $g,h$ on $\Om$ and a fixed $\widehat{\lambda}>0$, we have
\begin{equation}\label{eq:g and h functions}
	\forall\eps>0\textrm{ }\exists \delta>0\textrm{ such that if }
	\mathcal L^1(\{g\neq h\})<\delta,\textrm{ then }
	|F_{\gamma,\widehat{\lambda}}(g,\Om)-F_{\gamma,\widehat{\lambda}}(h,\Om)|<\eps.
\end{equation}

Recursively, for each $j=1,2,\ldots$ we choose $\lambda_j$ and then $\alpha_j$ as follows.
Consider a fixed $j\in\N$.
First choose a number
\begin{equation}\label{eq:lambda j}
\lambda_j\ge 2^{-j}\times 2^{\gamma+1}|I^{j-1}_1|^{-\gamma-1}.
\end{equation}
Now if $(x,y)\in E_{\gamma,\lambda_j}(f,\Om)$ and $\alpha_j<1/4$, then necessarily
$x,y$ belong to the convex hull $\mathrm{co}(J\cup J')$
of a nonempty ``$J$-interval'' $J$ and the next nonempty ``$J$-interval'' to the right, denoted by $J'$. 
(These ``$J$-intervals'' can be different sizes; note that we obtain empty ``$J$-intervals''
when $\alpha_k=1/2$ for some $k$.)
If we let $\alpha_j\searrow 0$, then $f$ on $\mathrm{co}(J\cup J')$
converges pointwise to a function that takes only two values, with a jump of size
$s\ge (1/2)^j$ at the middle point.
Thus we have
\begin{align*}
&\limsup_{\alpha_j\to 0}\lambda_j\nu_{\gamma} \left( \{(x,y)\in (\mathrm{co}(J\cup J'))^2 \colon|f(x)-f(y)|>\lambda_j|x-y|^{1+\gamma}\}\right)\\
&\qquad \le s\frac{2}{1+\gamma};
\end{align*}
recall \eqref{eq:one step function} and note that
the limit is uniform for all choices of the values of $\alpha_{j+1},\alpha_{j+2},\ldots$,
due to \eqref{eq:g and h functions}.
In total, taking the sum below over all nonempty ``$J$-intervals'', we get
\begin{equation}\label{eq:F gamma lambda j}
\begin{split}
F_{\gamma,\lambda_j}\left(f,\Om\right)
&\le \lambda_j \sum_{J}\nu_{\gamma}\left(\left\{(x,y)\in (\mathrm{co}(J\cup J'))^2 \colon|f(x)-f(y)|>\lambda_j|x-y|^{1+\gamma}\right\}\right)\\
&\le(1+1/j) \frac{2}{1+\gamma}
\end{split}
\end{equation}
if we choose $\alpha_j$ sufficiently small.
Now we have from \eqref{eq:SBV limit}, noting that it clearly holds also with $\R$ replaced 
by the unit interval $\Om$, that
\[
\lim_{\lambda\to\infty}F_{\gamma,\lambda}(f_j,\Om)=\frac{2}{\gamma}.
\]
Thus we can choose a sufficiently large $\lambda_j'>\lambda_j+1$ such that
\begin{equation}\label{eq:lambda prime choice}
F_{\gamma,\lambda_j'}(f_j,\Om) > \frac{2}{\gamma}-\frac{1}{j}.
\end{equation}
Furthermore, we can perform the choice of $\lambda_j'$ in such a way that 
\[
\nu_{\gamma}(	\left\{(x,y)\in \Om\times \Om\colon
|f_j(x)-f_j(y)|=\lambda_j'|x-y|^{1+\gamma}\right\})=0,
\]
since this is true for all except at most countably many choices of $\lambda_j'$.
Next choose $\alpha_{j+1},\ldots,\alpha_{k}$ to take the value $1/2$, and 
$\lambda_{j+1},\ldots,\lambda_{k}$ and $\lambda'_{j+1},\ldots,\lambda'_{k}$ to all take the
value $\lambda_j'$, for $k>j$ sufficiently large that
\begin{equation}\label{eq:lambda prime}
F_{\gamma,\lambda_j'}(f,\Om) > \frac{2}{\gamma}-\frac{1}{j}
\end{equation}
regardless of the choices of $\alpha_{k+1},\alpha_{k+2},\ldots$;
this is possible by \eqref{eq:lambda prime choice} and Lemma \ref{lem:uniform convergence}.
Then choose 
\[
\lambda_{k+1}\ge \max\{\lambda_j',2^{-(k+1)+\gamma+1}|I^{(k+1)-1}_1|^{-\gamma-1}\},
\]
and we repeat the process starting from \eqref{eq:lambda j}.
Note that the $\lambda_j $'s are an increasing sequence with limit $+\infty$,
and the same is true of the $\lambda'_j $'s.
Thus we get
\[
\liminf_{\lambda \to \infty}F_{\gamma,\lambda}(f,\Om)
\le \liminf_{j\to  \infty}F_{\gamma,\lambda_j}(f,\Om)
\le \frac{2}{1+\gamma}
\]
by \eqref{eq:F gamma lambda j},
and so the coefficient of $|D^cf|(\Om)$
in Theorem \ref{thm:main} is sharp.
We also get
\[
\limsup_{\lambda \to \infty}F_{\gamma,\lambda}\left(f,\Om\right)
\ge \limsup_{j\to  \infty}F_{\gamma,\lambda_j'}\left(f,\Om\right)
\ge \frac{2}{\gamma}
\]
by \eqref{eq:lambda prime}, showing that the limit does not exist.
\end{example}

\begin{remark}
If Theorem \ref{thm:1d case preli} turns out to be true also without the assumption of $f$ being
increasing, which seems likely, this could be used to prove that the $\Gamma$-limit of
the functionals $F_{\gamma,\lambda}(f,\Om)$ as $\lambda\to \infty$ is 
\[
\frac{C_n}{\gamma+1}|Df|(\Om).
\]
\end{remark}

		\noindent Address:\\
		
		\noindent Academy of Mathematics and Systems Science\\
		Chinese Academy of Sciences\\
		Beijing 100190, PR China\\
		E-mail: {\tt panulahti@amss.ac.cn}
	
\end{document}